\numberwithin{equation}{section}
\newtheorem{thm}{Theorem}[section]
\newtheorem{lemma}[thm]{Lemma}
\newtheorem{prop}[thm]{Proposition}
\newtheorem{cor}[thm]{Corollary}
\theoremstyle{definition}
\newtheorem{defn}[thm]{Definition}
\theoremstyle{remark}
\newtheorem{remark}[thm]{Remark}
\newcommand{\Z}{\mathbb{Z}}
\DeclareMathOperator{\Hom}{Hom}
\DeclareMathOperator{\GL}{GL}
\DeclareMathOperator{\PSL}{PSL}
\DeclareMathOperator{\GSp}{GSp}
\DeclareMathOperator{\Aut}{Aut}
\DeclareMathOperator{\Gal}{Gal}
\DeclareMathOperator{\Out}{Out}
\DeclareMathOperator{\Jac}{Jac}
\DeclareMathOperator{\MD}{MD}
\newcommand{\univ}{{\operatorname{univ}}}
\DeclareMathOperator{\coker}{coker}
\newcommand{\colonequals}{:=}
\def\isolow{\vbox to 0pt{\vss\hbox{$\scriptstyle\sim$}\vskip-2pt}}
\newcommand{\isor}{\xrightarrow{\;\isolow\;}}
\newcommand{\supth}[1]{\ensuremath{#1^{\mathrm{th}}}}
\newcommand{\ab}{{\operatorname{ab}}}
\newcommand{\lra}{\longrightarrow}
\title{Group-theoretic Johnson classes and non-hyperelliptic curves with torsion Ceresa class}
\author{Dean Bisogno}
\address{Department of Mathematics, Colorado State University, Fort Collins, CO 80523, USA}
\email{dbisogno@rams.colostate.edu}
\author{Wanlin Li}
\address{Massachusetts Institute of Technology, 77 Massachusetts Avenue, 
Cambridge, MA 02139, USA}
\email{wanlinli@mit.edu}
\author{Daniel Litt}
\address{Department of Mathematics, University of Toronto,  
Toronto, Ontario, 
Canada
M5S 2E4}
\email{daniel.litt@utoronto.ca}
\author{Padmavathi Srinivasan}
\address{Department of Mathematics, University of Georgia, 
Athens, GA 30602, USA}
\email{Padmavathi.Srinivasan@uga.edu}
\begin{document}


\removeabove{12pt}

\maketitle

\begin{prelims}

\DisplayAbstractInEnglish

\bigskip

\DisplayKeyWords

\medskip

\DisplayMSCclass

\end{prelims}


\newpage

\setcounter{tocdepth}{1}

\tableofcontents


\section{Introduction}

Let $X$ be a smooth, projective, geometrically integral curve over a field $K$ of genus $g\geq 3$, and let $x\in X(K)$ be a rational point. One can embed $X$ in its Jacobian $\Jac(X)$ via the Abel--Jacobi map $P \mapsto [P- x]$;  let $X^-$ denote the image of $X$ under the negation map on the group $\Jac(X)$. The Ceresa cycle is the homologically trivial algebraic cycle $X-X^-$ in $\Jac(X)$. A classical result of Giuseppe Ceresa 
\cite[Theorem 3.1]{Ceresa} shows that when $X$ is a very general curve over $\mathbb{C}$ of genus $g\ge 3$, the Ceresa cycle is not algebraically trivial.

Via the $\ell$-adic cycle class map, the Ceresa cycle gives rise to a Galois cohomology class
$$\mu(X,x) \in H^1\left(\Gal\left(\bar{K}/K\right), H^{2g-3}_{\text{\'et}}\left(\Jac(X)\otimes \bar{K},\mathbb{Z}_\ell\left(g-1\right)\right)\right)$$
which only depends on the rational equivalence class of the Ceresa cycle. Richard Hain and Makoto Matsumoto \cite{HM} reinterpret this class in terms of the Galois action on the pro-$\ell$ \'etale fundamental group of $X$ and describe an analogous class $\nu(X)$ which is basepoint-independent.

We define two classes $\MD(X, x)$ and $J(X)$ in Galois cohomology (the latter of which is basepoint-independent), called the \emph{modified diagonal} and \emph{Johnson} classes, which capture aspects of the action of Galois on the pro-$\ell$ \'etale fundamental group of $X$. Under the assumption that $X$ is smooth and projective, these classes are closely related to $\mu(X,x)$ and $\nu(X)$. The main novelty of our construction is that it proceeds via abstract group theory. In particular, it works for any pro-$\ell$ group with torsion-free abelianization---for example, we do not require our curves to be proper, and many of our results hold for general Demuskin groups. Even in the case of pro-$\ell$ surface groups, our analysis appears to refine existing results when $\ell=2$; for example, the classes $\MD(X, x)$ and $J(X)$ appear to give slightly more information than the classes $\mu(X, x)$ and $\nu(X)$ if $\ell=2$ (if $\ell\neq 2$, one may recover our classes from those in \cite{HM} and vice versa).

The Ceresa class is well known to be trivial if $X$ is hyperelliptic and $x$ is a rational Weierstrass point; likewise, the class $\nu(X)$ of \cite{HM} is trivial for any hyperelliptic curve. In Section \ref{sec:Fricke-Macbeath}, we use properties of the Johnson class to give what is, to our knowledge, the first known example of a non-hyperelliptic curve where $J(X)$ (and hence $\nu(X)$) is torsion. This curve is of genus $7$.

Moreover, in Section \ref{sec:dominated-torsion}, we show with Theorem \ref{thm:dominated-torsion} that any curve dominated by a curve with torsion Johnson class has torsion Johnson class as well. This can be viewed as a generalization of the fact that any curve dominated by a hyperelliptic curve is itself hyperelliptic. Using this property, we construct a non-hyperelliptic genus $3$ curve with torsion Johnson class.

\begin{thm}[Proposition~\ref{FrickeMacbeath}, the Fricke--Macbeath curve, and Corollary~\ref{genus-3-example}]
Let $C$ be a genus $7$ curve over a field $K$ of characteristic zero such that $C_{\overline{K}}$ has automorphism group  isomorphic to $\PSL_2(8)$. The Johnson class of\, $C$ $($that is, $J(C))$ and hence the basepoint-independent Ceresa class $\nu(C)$ defined in \cite{HM} are torsion. 

If $\iota\in \Aut(C)$ is any element of order $2$, then the quotient $C/\iota$ is non-hyperelliptic of genus $3$ with $J(C/\iota)$ and $\nu(C/\iota)$ torsion.
\end{thm}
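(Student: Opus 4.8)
The plan is to deduce both statements from one representation-theoretic input: the canonical action of $G \colonequals \PSL_2(8)$ on the $\ell$-adic homology $H_\ell$ of $C_{\overline{K}}$, together with the functoriality and basepoint-independence of the Johnson class. The guiding principle is that $J(C)$ is intrinsic to $C$, hence fixed by $\Aut(C_{\overline{K}})$. Concretely, the class lives in $H^1(\Gal_K, U_\ell)$, where $U_\ell$ is a ($G$-trivial) Tate twist of the $\Aut(C)$-subquotient $\wedge^3 H_\ell / H_\ell$ of $\wedge^3 H_\ell$, and $U_\ell$ carries a $G$-action commuting with the Galois action once we pass to a finite extension $L/K$ over which all automorphisms of $C_{\overline{K}}$ are defined. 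Functoriality forces $J(C)_L$ to be $G$-fixed; since $G$ is finite, taking $G$-invariants is exact on $\mathbb{Q}_\ell$-coefficients, so the image of $J(C)_L$ in $H^1(\Gal_L, U_\ell \otimes \mathbb{Q}_\ell)$ lands in $H^1(\Gal_L, (U_\ell \otimes \mathbb{Q}_\ell)^G)$. It therefore suffices to prove $(U_\ell \otimes \mathbb{Q}_\ell)^G = 0$ and then descend from $L$ to $K$, using that the kernel of restriction $H^1(\Gal_K, U_\ell\otimes\mathbb{Q}_\ell) \to H^1(\Gal_L, U_\ell\otimes\mathbb{Q}_\ell)$ is killed by $[L:K]$.

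To compute the invariants I would first identify $H_\ell \otimes \overline{\mathbb{Q}}_\ell$ with $V \oplus V^*$, where $V$ is one of the three $7$-dimensional (cuspidal) irreducible representations of $\PSL_2(8)$ — this is the known irreducibility of the canonical representation of the Fricke--Macbeath curve — and note that $V$ is self-dual. Since $H_\ell$ contains no trivial constituent, one has $(\wedge^3 H_\ell/H_\ell)^G \otimes \overline{\mathbb{Q}}_\ell = (\wedge^3 H_\ell)^G \otimes \overline{\mathbb{Q}}_\ell$, and expanding $\wedge^3(V\oplus V^*)$ gives $\dim(\wedge^3 H_\ell)^G = 2\langle \wedge^3 V, \mathbf{1}\rangle + 2\langle \wedge^2 V, V\rangle$. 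The statement reduces to the finite character computation showing that the trivial representation does not occur in $\wedge^3 V$ and that $V$ does not occur in $\wedge^2 V$; this is carried out from the character table of $\PSL_2(8)$ via the usual formulas for $\chi_{\wedge^2 V}$ and $\chi_{\wedge^3 V}$, and I expect it to be the main computational step of the first part.

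For the corollary, the genus of $C/\iota$ equals $\dim H_\ell^{\iota} \otimes \overline{\mathbb{Q}}_\ell = (7 + \chi_V(\iota))/2$; since the cuspidal character takes the value $-1$ on the (unipotent) involution class, this is $3$. The torsion assertion is then immediate from Theorem \ref{thm:dominated-torsion}: the quotient map $C \to C/\iota$ is dominant and $C$ has torsion Johnson class by the first part, so $C/\iota$ has torsion Johnson class, and correspondingly $\nu(C/\iota)$ is torsion.

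The remaining and most delicate point is non-hyperellipticity, which I would prove by contradiction. A hyperelliptic involution $w$ on $C/\iota$ is central in $\Aut(C/\iota)$ and acts as $-1$ on $H^0(C/\iota,\Omega^1)$, hence as inversion on $\Jac(C/\iota)$ and thus trivially on $\Jac(C/\iota)[2]$. Since the double cover $C \to C/\iota$ is classified by a $2$-torsion class fixed by $w$, the involution $w$ lifts to an automorphism $\tilde{w} \in \Aut(C) = G$ commuting with $\iota$, so $\tilde{w} \in C_G(\iota)$. It then suffices to show that no element of $C_G(\iota)$ acts as $-1$ on the $3$-dimensional space $H_\ell^{\iota}$. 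For this I would use that $C_G(\iota)$ is the elementary abelian Sylow $2$-subgroup $U \cong (\mathbb{Z}/2)^3$ and that the restriction of the cuspidal representation $V$ to $U$ is the sum of all nontrivial characters, each once; consequently $H_\ell^{\iota}$ is the sum of the three nontrivial characters trivial on $\iota$, and a direct check shows every element of $U$ acts on $H_\ell^{\iota}$ with trace $3$ or $-1$, never $-3$. This contradicts the existence of $w$, so $C/\iota$ is non-hyperelliptic. I expect the lifting step — justifying that any hyperelliptic involution descends from $G$ — to require the most care.
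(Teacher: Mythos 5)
Your argument for the first assertion is essentially the paper's: after passing to a finite extension where the automorphisms are defined (and descending via corestriction, as in Proposition \ref{prop:torsion-geometric}), everything reduces to showing the relevant coefficient module has no $\PSL_2(8)$-invariants, which is a character computation. The paper does this by computing $\langle \chi\otimes\chi,\chi\rangle=0$ for the $7$-dimensional constituent $\chi$ of $H^1$, which subsumes your two inner products $\langle \wedge^3 V,\mathbf{1}\rangle$ and $\langle \wedge^2V,V\rangle$. One imprecision: $\PSL_2(8)$ has \emph{four} $7$-dimensional irreducibles, and the one occurring in $H^1$ is the unique one with rational character values ($\chi_2$ in the paper's table), not ``one of the three'' Galois-conjugate ones with values in $\mathbb{Q}(\zeta_9)^+$; the paper pins this down using the rationality of traces on $H^1_{\mathrm{sing}}(C,\mathbb{Q})$, and you should too, since the inner-product computation depends on which character you use. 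Your genus count and the deduction of torsion for $J(C/\iota)$ from Theorem \ref{thm:dominated-torsion} match the paper.

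The genuine gap is in your non-hyperellipticity argument, specifically the lifting step you yourself flag. The cover $C\to C/\iota$ is \emph{ramified} (at $4$ points, by your own Lefschetz computation), so it is not classified by a $2$-torsion class in $\Jac(C/\iota)$; it is classified by the pair $(B,L)$ with $L^{\otimes 2}\cong\mathcal{O}(B)$, where $B$ is the branch divisor. A hyperelliptic involution $w$ of $C/\iota$ lifts to $C$ only if $w$ preserves $B$ and the isomorphism class of $L$, and neither is automatic: centrality of $w$ in $\Aut(C/\iota)$ says nothing about the branch locus of this particular cover, and $L$ is not $2$-torsion, so the fact that $w$ acts as $-1$ on $\Jac(C/\iota)$ does not fix its class. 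Without the lift, your (correct) observation that no element of $C_G(\iota)\cong(\mathbb{Z}/2)^3$ acts with trace $-3$ on $V^{\iota}$ only shows that a putative hyperelliptic involution does not come from $\Aut(C)$ --- it does not rule one out, since $\Aut(C/\iota)$ may be strictly larger than $N_G(\langle\iota\rangle)/\langle\iota\rangle$. The paper sidesteps this entirely: since all involutions in $\PSL_2(8)$ are conjugate, all quotients $C/\langle\iota\rangle$ are isomorphic, and \cite[Section 2]{TopVerschoor} exhibits one of them as a smooth plane quartic, which is non-hyperelliptic. You would need either to repair the lifting argument or to substitute some such direct identification of the quotient (e.g.\ by showing $\operatorname{Sym}^2H^0(\Omega^1_{C/\iota})\to H^0(\Omega^{\otimes 2}_{C/\iota})$ is injective).
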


It is well known that such curves $C$ exist (see \textit{e.g.}~\cite{MR0177342}).

\begin{remark}
  It was apparently a folk expectation (see \textit{e.g.}~\cite{MO-question}) that torsion Ceresa class implies hyperellipticity; our examples show this expectation fails. Herbert Clemens has asked \cite[Question 8.5]{hain87} if, for $C$ a curve of genus $3$, having trivial Ceresa cycle modulo algebraic equivalence is equivalent to hyperellipticity. Our example (Corollary \ref{genus-3-example}) provides evidence that this question has a negative answer, at least if one interprets triviality in the \emph{rational} group of algebraic cycles modulo algebraic equivalence; the curve discussed in Corollary \ref{genus-3-example} is a natural candidate for a counterexample. Note that triviality modulo algebraic equivalence neither implies nor is implied by triviality of $J(C)$ or $\nu(C)$.
\end{remark}

\begin{remark}
After we posted this paper to arXiV, Benedict Gross explained to us that this result verifies a prediction following from the Beilinson conjectures. Namely, one may utilize the interpretation of the Fricke--Macbeath curve $C$ as a Shimura curve to compute the $L$-function of $C^3$; the $L$-value controlling the rank of the Chow group in which the Ceresa cycle lives is non-zero, and hence this group is predicted to have rank zero. Hence the Ceresa cycle  is necessarily torsion in the Chow group. Congling Qiu and Wei Zhang \cite{QZ} later give an unconditional proof of Gross's claim. Their argument uses automorphic methods and takes as input the fact that there are no $\Aut(C)$-invariant trilinear forms on the holomorphic differentials of $C$ combined with the arithmetic input that the Jacobian of $C$ has Mordell--Weil rank zero over the totally real subfield of $\mathbb{Q}(\zeta_7)$. See also \cite{LS} for related discussion and results. 
\end{remark}

\begin{remark}
Chao Li pointed us to Jaap Top's thesis \cite[Section 3.1]{Top}, which mentions an example of a non-hyperelliptic curve of genus $3$ with torsion Ceresa cycle modulo algebraic equivalence, constructed by Chad Schoen---the example seems never to have been published. In personal communication, Schoen has indicated to us that he does not recall the example. After seeing this paper on arXiV, Arnaud Beauville \cite{Bea1} discovered a non-hyperelliptic genus $3$ curve (different from the curve in Corollary~\ref{genus-3-example}) with torsion Ceresa class in the intermediate Jacobian. Bert van Geemen alerted us that Beauville's example was Schoen's lost example---in \cite{BeaSch}, Beauville and Schoen together proved that  the Ceresa cycle of their curve is torsion modulo algebraic equivalence.
\end{remark}

\subsection*{Outline of the paper}
In Section~\ref{Sgrpthy}, we give a group-theoretic construction of the so-called modified diagonal and Johnson classes associated to a finitely generated pro-$\ell$ group with torsion-free abelianization. In Section~\ref{SCerFunGrp}, we specialize this construction to the pro-$\ell$ fundamental group of a curve and compare it to the classes $\mu(X,x)$, $\nu(X)$ of Hain--Matsumoto \cite{HM}. 
In Section~\ref{Storsion}, we study properties of this construction and apply them to give a proof of the fact that hyperelliptic curves have $2$-torsion Johnson class, and we show that any model of the Fricke--Macbeath curve has torsion Johnson/Ceresa class. We also show that any curve dominated by a curve with torsion Johnson class has torsion Johnson class itself; hence a genus $3$ non-hyperelliptic curve which is a quotient of the Fricke--Macbeath curve has torsion Johnson class as well.

\subsection*{Acknowledgements}  The authors would like to thank Jordan Ellenberg, Benedict Gross, Richard Hain, Chao Li, Bjorn Poonen, and David Stapleton for helpful conversations and suggestions. We thank the
referee for the valuable feedback and comments.

This work also owes a substantial intellectual debt to \cite{HM}, which will be apparent throughout.

\section{Group-theoretic Ceresa classes}\label{Sgrpthy}
Let $\ell$ be a prime and $G$  a non-trivial finitely generated pro-$\ell$ group with torsion-free abelianization $G^{\ab}$. 
Define the $\ell$-adic group ring of $G$ as
$$\mathbb{Z}_\ell[[G]]:=\varprojlim_{G\twoheadrightarrow H} \mathbb{Z}_\ell[H].$$
Here the inverse limit is taken over all finite groups $H$ which are continuous quotients of $G$. Let $\mathscr{I}\subset \mathbb{Z}_\ell[[G]]$ be the augmentation ideal.

\begin{prop}\label{PGabaug}
  The map $\phi\colon G\to \mathscr{I}/\mathscr{I}^2$ 
  given by
  $$\phi\colon g\longmapsto {g-1}$$
  is a continuous group homomorphism and induces an isomorphism
  $$G^{\ab}\isor \mathscr{I}/\mathscr{I}^2.$$
\end{prop}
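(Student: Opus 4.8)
The plan is to check the homomorphism and continuity assertions by a direct manipulation in the augmentation ideal, and then to prove that the induced map $\bar\phi\colon G^{\text{ab}}\to\mathscr{I}/\mathscr{I}^2$ is an isomorphism by establishing surjectivity by hand and injectivity by constructing a retraction onto the abelianized group ring. For the homomorphism property, I would simply compute, for $g,h\in G$,
$$\phi(gh)-\phi(g)-\phi(h) = (gh-1)-(g-1)-(h-1) = (g-1)(h-1)\in\mathscr{I}^2,$$
so that $\phi(gh)=\phi(g)+\phi(h)$ in the additive group $\mathscr{I}/\mathscr{I}^2$. Continuity of $\phi$ follows from continuity of the map $g\mapsto g$ into $\mathbb{Z}_\ell[[G]]$ composed with the quotient projection. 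Since the target is abelian, $\phi$ factors through $G^{\text{ab}}$ to yield the map $\bar\phi$ whose bijectivity is at issue.

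For surjectivity I would work at each finite level: in $\mathbb{Z}_\ell[H]$ the augmentation ideal $I_H$ is generated as a $\mathbb{Z}_\ell$-module by the elements $h-1$, because $\sum a_h h = \sum a_h(h-1)$ whenever $\sum a_h=0$. Passing to the inverse limit, $\mathscr{I}$ is topologically generated by $\{g-1 : g\in G\}$, so their images already generate $\mathscr{I}/\mathscr{I}^2$; hence $\bar\phi$ is surjective.

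The main work is injectivity, and here the torsion-freeness hypothesis enters decisively. The quotient $G\to G^{\text{ab}}$ induces a continuous ring homomorphism $\mathbb{Z}_\ell[[G]]\to\mathbb{Z}_\ell[[G^{\text{ab}}]]$ carrying $\mathscr{I}$ into the augmentation ideal $\bar{\mathscr{I}}$ of $\mathbb{Z}_\ell[[G^{\text{ab}}]]$ and $\mathscr{I}^2$ into $\bar{\mathscr{I}}^2$, and thus a map $r\colon\mathscr{I}/\mathscr{I}^2\to\bar{\mathscr{I}}/\bar{\mathscr{I}}^2$. Because $G^{\text{ab}}$ is a finitely generated torsion-free $\mathbb{Z}_\ell$-module, it is isomorphic to $\mathbb{Z}_\ell^n$, so the Iwasawa algebra $\mathbb{Z}_\ell[[G^{\text{ab}}]]$ is the power series ring $\mathbb{Z}_\ell[[T_1,\dots,T_n]]$ with $T_i = g_i-1$ for topological generators $g_i$, and $\bar{\mathscr{I}}=(T_1,\dots,T_n)$. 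Consequently $\bar{\mathscr{I}}/\bar{\mathscr{I}}^2$ is free on the classes of the $T_i$, giving a canonical identification $\bar{\mathscr{I}}/\bar{\mathscr{I}}^2\cong G^{\text{ab}}$ under which the class of $g-1$ corresponds to the image of $g$. Chasing definitions, the composite
$$G^{\text{ab}}\xrightarrow{\ \bar\phi\ }\mathscr{I}/\mathscr{I}^2\xrightarrow{\ r\ }\bar{\mathscr{I}}/\bar{\mathscr{I}}^2\cong G^{\text{ab}}$$
is the identity. A map with a left inverse is injective, so this together with the surjectivity above shows $\bar\phi$ is an isomorphism.

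I expect the delicate points to be exactly the two places where the inverse limit intervenes: justifying that $\mathscr{I}$ is \emph{topologically} generated by the $g-1$ so that surjectivity genuinely holds for the completed ring rather than only at finite levels, and identifying $\mathbb{Z}_\ell[[G^{\text{ab}}]]$ with a power series ring (which is where the torsion-free hypothesis is used in an essential way). The remaining steps are formal computations within the augmentation ideal.
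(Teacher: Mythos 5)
Your argument is correct, but it is worth noting that the paper does not actually prove this proposition: it simply cites \cite[Lemma 6.8.6(b)]{ProfiniteGroups}, so your self-contained proof is doing genuinely more work than the text. The structure you use --- homomorphism property from $(gh-1)-(g-1)-(h-1)=(g-1)(h-1)\in\mathscr{I}^2$, surjectivity from the fact that the $g-1$ topologically generate $\mathscr{I}$, and injectivity via the retraction $\mathscr{I}/\mathscr{I}^2\to\bar{\mathscr{I}}/\bar{\mathscr{I}}^2\cong G^{\text{ab}}$ through the Iwasawa algebra $\mathbb{Z}_\ell[[G^{\text{ab}}]]\cong\mathbb{Z}_\ell[[T_1,\dots,T_n]]$ --- is essentially the standard proof of the cited lemma, and it correctly isolates where torsion-freeness of $G^{\text{ab}}$ is used (without it, $\mathbb{Z}_\ell[[G^{\text{ab}}]]$ is not a power series ring and one only gets $G^{\text{ab}}\cong\mathscr{I}/\mathscr{I}^2$ after the appropriate completed tensor, which here is automatic since $G$ is pro-$\ell$). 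Two small points you could tighten: for surjectivity, topological generation gives that the image of $\bar\phi$ is dense, and you should add that the image is closed (it is the continuous image of the compact group $G^{\text{ab}}$ and is automatically a $\mathbb{Z}_\ell$-submodule by density of $\mathbb{Z}$ in $\mathbb{Z}_\ell$), so dense implies all; and in the identification $\bar{\mathscr{I}}/\bar{\mathscr{I}}^2\cong G^{\text{ab}}$ you are implicitly computing $\prod_i(1+T_i)^{a_i}-1\equiv\sum_i a_iT_i\bmod\bar{\mathscr{I}}^2$ for $a_i\in\mathbb{Z}_\ell$, which is what makes the composite the identity rather than merely an endomorphism. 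With those remarks the proof is complete.
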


\begin{proof}
This is \cite[Lemma 6.8.6(b)]{ProfiniteGroups}.
\end{proof}

Let $Z(G)$ denote the center of $G$. The action of $G$ on itself by conjugation gives a short exact sequence
$$1\lra G/Z(G)\lra \Aut(G)\lra \Out(G)\lra 1$$
of continuous maps of profinite groups.

\begin{defn}\label{DuniC} The \emph{modified diagonal class}, denoted by
  $$\MD_{\univ}\in H^1\left(\Aut(G), \Hom\left(\mathscr{I}/\mathscr{I}^2, \mathscr{I}^2/\mathscr{I}^3\right)\right),$$
  is the class associated to the extension of continuous $\Aut(G)$-modules 
\begin{equation}\label{EMDU}
0\lra \mathscr{I}^2/\mathscr{I}^3\lra \mathscr{I}/\mathscr{I}^3\lra \mathscr{I}/\mathscr{I}^2\lra 0.\end{equation}
The existence of $\MD_\univ$ follows from the fact that $\mathscr{I}/\mathscr{I}^2$ is a $\mathbb{Z}_\ell$-module (as $G^{\ab}$ is torsion-free by assumption). An explicit cocycle representing $\MD_\univ$ will be given in Section \ref{Section:Johnson}.
\end{defn}

\begin{remark}
We call this class the modified diagonal class because we expect that when $G$ is the pro-$\ell$ \'etale fundamental group of a curve, the Galois-cohomological avatar of $\MD_{\univ}$ (see Section \ref{SCerFunGrp}) may be written rationally as a multiple of the image of the Gross--Kudla--Schoen \cite{GrossSchoen, GrossKudla} modified diagonal cycle under an \'etale Abel--Jacobi map. See \textit{e.g.}~\cite{DarmonRotgerSols} for a Hodge-theoretic analogue of this fact. 
\end{remark}

We now proceed to find an avatar of $\MD_{\univ}$ in the cohomology of the outer automorphism group of $G$, $\Out(G)$. Geometrically, this will correspond to removing the basepoint-dependence of the class $\MD_{\univ}$ in the case where $G$ is the pro-$\ell$ \'etale fundamental group of a curve.

\subsection{Descending to \texorpdfstring{$\boldsymbol{\Out(G)}$}{Out(G)}, and the Johnson class}\label{Section:Johnson}
We first analyze the pullback of $\MD_{\univ}$ along the canonical map $G\to \Aut(G)$. We will use this analysis to construct a quotient $A(G)$ of $\Hom(\mathscr{I}/\mathscr{I}^2, \mathscr{I}^2/\mathscr{I}^3)$ such that $\MD_{\univ}|_G$ vanishes in $H^1(G, A(G))$;  hence $\MD_{\univ}$ will induce a class in $H^1(\Out(G), A(G))$, which we will term the Johnson class. The constructions here are closely related to work of Andreadakis, Bachmuth, and others (see \textit{e.g.}~\cite{andrea1, bach2, bach1}), but we include the details here as those papers deal with the discrete, rather than profinite, situation.

Note that $\mathscr{I}/\mathscr{I}^2$ is a free $\mathbb{Z}_\ell$-module by Proposition~\ref{PGabaug} and our assumption that $G^{\ab}$ is torsion-free. Tensoring the short exact sequence \eqref{EMDU} by $(\mathscr{I}/\mathscr{I}^2)^\vee$ yields 
$$0\lra \Hom \left(\mathscr{I}/\mathscr{I}^2,\mathscr{I}^2/\mathscr{I}^3\right)\lra \Hom\left(\mathscr{I}/\mathscr{I}^2,\mathscr{I}/\mathscr{I}^3\right)\lra \Hom\left(\mathscr{I}/\mathscr{I}^2,\mathscr{I}/\mathscr{I}^2\right)\lra 0.$$
The last term admits a natural map $\mathbb{Z}_\ell \hookrightarrow \Hom(\mathscr{I}/\mathscr{I}^2,\mathscr{I}/\mathscr{I}^2)$ (sending $1$ to the identity map), and pulling back along this inclusion gives a $G$-module extension
\begin{equation}\label{EdefiningX}
  0\lra \Hom \left(\mathscr{I}/\mathscr{I}^2,\mathscr{I}^2/\mathscr{I}^3\right)\lra X\lra \mathbb{Z}_\ell\lra 0,
\end{equation}
where $G$ acts trivially on $\Hom (\mathscr{I}/\mathscr{I}^2,\mathscr{I}^2/\mathscr{I}^3)$ and $\mathbb{Z}_\ell$ but non-trivially on $X$. The extension is characterized by a group homomorphism  
\begin{align*}
  G &\lra \Hom\left(\mathbb{Z}_\ell,\Hom \left(\mathscr{I}/\mathscr{I}^2,\mathscr{I}^2/\mathscr{I}^3\right)\right) \simeq \Hom \left(\mathscr{I}/\mathscr{I}^2,\mathscr{I}^2/\mathscr{I}^3\right)\\
    g & \longmapsto \left(v \mapsto g\left(\tilde{v}\right)-\tilde{v}\right),
\end{align*}
where $\tilde{v}$ is any lift of $v \in \mathbb{Z}_\ell$ to $X$.

This map factors through $G^{\ab} \cong \mathscr{I}/\mathscr{I}^2$ as $\Hom (\mathscr{I}/\mathscr{I}^2,\mathscr{I}^2/\mathscr{I}^3)$ is Abelian.

\begin{defn}\label{Dm}
For the rest of the paper, let
\begin{align*}
m \colon G^{\ab} \lra \Hom \left(\mathscr{I}/\mathscr{I}^2,\mathscr{I}^2/\mathscr{I}^3\right)
\end{align*}
be the map coming from the extension class of \eqref{EdefiningX} described in the paragraphs above. 
\end{defn}

We now give a more explicit description of the map $m$.

\begin{lemma}\label{Lmbycomult}
Consider the commutator map
\begin{align*}
  \left(\mathscr{I}/\mathscr{I}^2\right)^{\otimes 2} &\lra \mathscr{I}^2/\mathscr{I}^3\\ x\otimes y &\longmapsto xy-yx.
\end{align*}
Then the map $m$ in Definition~\ref{Dm} is the same as the map induced by adjunction:
$$
m\colon \mathscr{I}/\mathscr{I}^2\lra \Hom\left(\mathscr{I}/\mathscr{I}^2, \mathscr{I}^2/\mathscr{I}^3\right), \quad x\longmapsto \left(y\mapsto xy-yx\right)
$$
under the identification between $G^{\ab}$ and $\mathscr{I}/\mathscr{I}^2$ from Proposition \ref{PGabaug}.
\end{lemma}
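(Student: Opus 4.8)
The plan is to unwind the definition of $m$ coming from the extension \eqref{EdefiningX} and reduce it to a one-line computation in the group ring modulo $\mathscr{I}^3$. First I would make the module $X$ explicit. By construction $X$ is the fiber product of the projection $\Hom(\mathscr{I}/\mathscr{I}^2,\mathscr{I}/\mathscr{I}^3)\to \Hom(\mathscr{I}/\mathscr{I}^2,\mathscr{I}/\mathscr{I}^2)$ with the inclusion $\mathbb{Z}_\ell\hookrightarrow \Hom(\mathscr{I}/\mathscr{I}^2,\mathscr{I}/\mathscr{I}^2)$ sending $1$ to the identity. Hence an element of $X$ lying over $1\in\mathbb{Z}_\ell$ is precisely a $\mathbb{Z}_\ell$-linear section $f\colon \mathscr{I}/\mathscr{I}^2\to\mathscr{I}/\mathscr{I}^3$ of the canonical projection; such a section exists because $\mathscr{I}/\mathscr{I}^2$ is free over $\mathbb{Z}_\ell$ (Proposition \ref{PGabaug}). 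Fix one and call it $\widetilde{1}$.

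Next I would pin down the $G$-action. All modules carry the action restricted along $G\to\Aut(G)$, i.e. the one induced by conjugation $c_g(\xi)=g\xi g^{-1}$ on $\mathbb{Z}_\ell[[G]]$. The crucial observation is that conjugation acts trivially on $\mathscr{I}/\mathscr{I}^2$: since $c_g$ is an inner automorphism it is the identity on $G^{ab}$, hence on $\mathscr{I}/\mathscr{I}^2$ via Proposition \ref{PGabaug}. The same argument shows $c_g$ acts trivially on $\mathscr{I}^2/\mathscr{I}^3$, which is what guarantees that $m(g)$ lands in $\mathscr{I}^2/\mathscr{I}^3$ and is independent of the chosen section. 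Because the source $\mathscr{I}/\mathscr{I}^2$ has trivial action, the $G$-action on $X\subseteq\Hom(\mathscr{I}/\mathscr{I}^2,\mathscr{I}/\mathscr{I}^3)$ reduces to $(g\cdot f)(a)=c_g(f(a))$, so $m(g)=g\cdot\widetilde 1-\widetilde 1$ is the homomorphism $a\mapsto c_g(f(a))-f(a)$.

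It then suffices to evaluate $m(g)$ on $a=\phi(h)$ for $h\in G$, where $\phi(h)$ denotes the class of $h-1$ in $\mathscr{I}/\mathscr{I}^2$. Because $c_g$ acts trivially on $\mathscr{I}^2/\mathscr{I}^3$, the difference $c_g(u)-u$ is independent of the choice of lift $u\in\mathscr{I}/\mathscr{I}^3$ of $\phi(h)$; taking $u$ to be the class of $h-1$, we obtain in $\mathscr{I}/\mathscr{I}^3$
$$m(g)(\phi(h))=c_g(h-1)-(h-1)=(ghg^{-1}-1)-(h-1)=ghg^{-1}-h.$$
Writing $ghg^{-1}-h=([g,h]-1)h$ and noting $[g,h]-1\in\mathscr{I}^2$, so that multiplying by $h=1+(h-1)$ alters it only by an element of $\mathscr{I}^3$, we get $m(g)(\phi(h))\equiv[g,h]-1\pmod{\mathscr{I}^3}$. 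Finally I would invoke the classical congruence
$$[g,h]-1\equiv(g-1)(h-1)-(h-1)(g-1)\pmod{\mathscr{I}^3},$$
proved by expanding $g^{-1}$ and $h^{-1}$ to second order in $\mathscr{I}$. Setting $x=\phi(g)$ and $y=\phi(h)$ and using the identification of Proposition \ref{PGabaug}, this reads $m(x)(y)=xy-yx$, the adjoint of the commutator map.

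The computation is essentially routine once the definitions are unwound, so I do not expect a serious obstacle. The step requiring the most care is the bookkeeping of the $G$-action on $X$ — in particular verifying that both $\mathscr{I}/\mathscr{I}^2$ and $\mathscr{I}^2/\mathscr{I}^3$ carry the trivial conjugation action, which is exactly what makes $(g\cdot f)(a)=c_g(f(a))$ valid, lets one replace $f(\phi(h))$ by the representative $h-1$, and ensures $m(g)$ is well defined. The only genuine piece of content is the group-ring commutator congruence, and it is classical.
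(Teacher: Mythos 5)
Your proof is correct and follows essentially the same route as the paper: both unwind $m$ as $g\mapsto \bigl(y\mapsto g\,s(y)\,g^{-1}-s(y)\bigr)$ for a section $s$ of $\mathscr{I}/\mathscr{I}^3\to\mathscr{I}/\mathscr{I}^2$, use triviality of the conjugation action on $\mathscr{I}/\mathscr{I}^2$ and $\mathscr{I}^2/\mathscr{I}^3$ to see independence of the lift, and compute modulo $\mathscr{I}^3$. The only cosmetic difference is that you evaluate on the classes $\phi(h)=h-1$ and invoke the congruence $[g,h]-1\equiv (g-1)(h-1)-(h-1)(g-1)\pmod{\mathscr{I}^3}$ (implicitly using that these classes topologically generate $\mathscr{I}/\mathscr{I}^2$, which is harmless), whereas the paper computes directly with an arbitrary lift $s(y)$.
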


\begin{proof}
  Let $X$ be as in~\eqref{EdefiningX}. Let $s\in X \subset \Hom(\mathscr{I}/\mathscr{I}^2,\mathscr{I}/\mathscr{I}^3)$ be an element reducing to the identity modulo $\mathscr{I}^2$. Then we define maps
  $$
  m_1,m_2 \colon G \lra \mathscr{I}/\mathscr{I}^2\lra \Hom\left(\mathscr{I}/\mathscr{I}^2, \mathscr{I}^2/\mathscr{I}^3\right)
  $$
  by
\begin{align*}
 m_1(g) &= \left(y \longmapsto g s(y) g^{-1} - s(y)\right),\\
 m_2(g) &= \left(y \longmapsto (g-1)s(y)-s(y)(g-1) = gs(y)-s(y)g\right).
\end{align*}
  The map $m_1$ is by definition the same as the map in Definition \ref{Dm}. The map $m_2$ is an explicit formula for the map in the statement of the lemma. Neither map depends on the choice of $s$. We wish to show they are the same.

For any $g\in G$, we have 
$$g^{-1}=\frac{1}{1+(g-1)}=1-(g-1)+(g-1)^2\bmod \mathscr{I}^3.$$ 
Hence for $g\in G$, $y\in\mathscr{I}/\mathscr{I}^2$, and $s(y) \in\mathscr{I}/\mathscr{I}^3$ being a lift of $y$, we have modulo $\mathscr{I}^3$:
\begin{align*}
    ((m_1-m_2)(g))(y) &\equiv
    gs(y)g^{-1}-s(y)-gs(y)+s(y)g\\
    &\equiv gs(y)(g^{-1}-1)-s(y)(1-g)\\
    &\equiv gs(y)((1-g)+(g-1)^2)-s(y)(1-g)\\
    &\equiv (g-1)s(y)(1-g)+gs(y)(g-1)^2\\
   &\equiv 0
\end{align*}
as $g-1\in \mathscr{I}$ and $s(y)\in \mathscr{I}/\mathscr{I}^3$ above. This shows that $m_1=m_2$, as desired.
\end{proof}

\begin{defn}\label{DAG}
Let $A(G):=\coker(m\colon \mathscr{I}/\mathscr{I}^2\to \Hom(\mathscr{I}/\mathscr{I}^2, \mathscr{I}^2/\mathscr{I}^3))$ be the cokernel of the commutator map defined above. 
\end{defn}

Using the quotient map $\Hom(\mathscr{I}/\mathscr{I}^2, \mathscr{I}^2/\mathscr{I}^3)\to A(G)$ and the inclusion $G/Z(G)\to \Aut(G)$, we get a map
$$H^1\left(\Aut(G), \Hom\left(\mathscr{I}/\mathscr{I}^2, \mathscr{I}^2/\mathscr{I}^3\right)\right)\lra H^1\left(\Aut(G), A(G)\right)\lra H^1\left(G/Z(G), A(G)\right).$$

\begin{prop}\label{J-trivial-on-G}\label{basepoint-free-prop}
The image of\, $\MD_{\univ}$ under the composition above is zero.
\end{prop}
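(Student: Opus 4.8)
The plan is to exploit the fact that the two maps appearing in the statement --- pushforward along $q\colon \Hom(\mathscr{I}/\mathscr{I}^2,\mathscr{I}^2/\mathscr{I}^3)\to A(G)$ and restriction along $G/Z(G)\hookrightarrow \Aut(G)$ --- are each functorial and hence commute. So I will first restrict $\MD_\univ$ to the inner automorphism subgroup $G/Z(G)$ and only afterwards pass to the cokernel $A(G)$. The crucial preliminary observation is that the conjugation action of $G$ on $\mathbb{Z}_\ell[[G]]$ is trivial on each graded piece $\mathscr{I}^k/\mathscr{I}^{k+1}$: for $g,h\in G$ one has $g(h-1)g^{-1}=ghg^{-1}-1$, and since $ghg^{-1}$ and $h$ have the same image in $G^{\text{ab}}\cong \mathscr{I}/\mathscr{I}^2$ they differ by an element of $\mathscr{I}^2$; writing $\mathscr{I}^{k+1}=\mathscr{I}^k\cdot \mathscr{I}$ and expanding products, a short induction then gives $g\xi g^{-1}\equiv \xi \pmod{\mathscr{I}^{k+1}}$ for all $\xi\in\mathscr{I}^k$. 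In particular $G/Z(G)$ acts trivially on $\mathscr{I}/\mathscr{I}^2$, on $\mathscr{I}^2/\mathscr{I}^3$, on $\Hom(\mathscr{I}/\mathscr{I}^2,\mathscr{I}^2/\mathscr{I}^3)$, and therefore on its quotient $A(G)$.

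Because the coefficient modules carry the trivial $G/Z(G)$-action, the relevant cohomology groups collapse to continuous homomorphism groups: $H^1(G/Z(G),A(G))=\Hom(G/Z(G),A(G))$, and similarly with $\Hom(\mathscr{I}/\mathscr{I}^2,\mathscr{I}^2/\mathscr{I}^3)$ coefficients. I then identify the restriction $\MD_\univ|_G$ with the map $m$. Indeed, the passage from \eqref{EMDU} to \eqref{EdefiningX} --- tensoring with $(\mathscr{I}/\mathscr{I}^2)^\vee$ and pulling back along $\mathbb{Z}_\ell\hookrightarrow\Hom(\mathscr{I}/\mathscr{I}^2,\mathscr{I}/\mathscr{I}^2)$ --- is exactly the natural procedure computing the image of an extension class under the isomorphism $\operatorname{Ext}^1_G(\mathscr{I}/\mathscr{I}^2,\mathscr{I}^2/\mathscr{I}^3)\cong H^1(G,\Hom(\mathscr{I}/\mathscr{I}^2,\mathscr{I}^2/\mathscr{I}^3))$ (available since $\mathscr{I}/\mathscr{I}^2$ is $\mathbb{Z}_\ell$-free by Proposition \ref{PGabaug}). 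This procedure is natural in the group, hence compatible with restriction from $\Aut(G)$ to $G$, so $\MD_\univ|_G$ corresponds to the homomorphism $m$, which by Lemma \ref{Lmbycomult} is the commutator map. Pushing forward along $q$ then annihilates it, since $q\circ m=0$ by the very definition $A(G)=\coker(m)$ (Definition \ref{DAG}).

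Finally I descend from $G$ to $G/Z(G)$. Write $\alpha\in\Hom(G/Z(G),A(G))$ for the image of $\MD_\univ$ under the composite in the statement, and let $\pi\colon G\to G/Z(G)$ be the quotient map. Since $G\xrightarrow{\pi} G/Z(G)\hookrightarrow \Aut(G)$ is the conjugation map, functoriality identifies the pullback $\alpha\circ\pi\in\Hom(G,A(G))$ with the pushforward $q_*(\MD_\univ|_G)$, which by the previous paragraph equals $q\circ m\circ(G\to G^{\text{ab}})=0$. As $\pi$ is surjective, precomposition with $\pi$ is injective on $\Hom(G/Z(G),A(G))$, so $\alpha\circ\pi=0$ forces $\alpha=0$, as desired.

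I expect the main obstacle to be the bookkeeping in the middle step: one must verify that the somewhat ad hoc construction of $m$ in Definition \ref{Dm} genuinely computes the \emph{restriction} of the extension class $\MD_\univ$ (and not some unrelated homomorphism), and that the triviality of the inner action is invoked consistently so that every cohomology group in sight reduces to a $\Hom$-group. Once these identifications are secured, the vanishing is immediate from $q\circ m=0$ together with the surjectivity of $\pi$.
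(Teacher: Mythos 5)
Your proposal is correct and follows essentially the same route as the paper: identify the restriction $\MD_\univ|_G$ with the homomorphism $m$ of Definition \ref{Dm} (using that the inner action on the graded pieces is trivial, so $H^1$ collapses to $\Hom$), then observe that passing to $A(G)=\coker(m)$ kills it, and descend along the surjection $G\twoheadrightarrow G/Z(G)$. The extra details you supply --- the induction showing conjugation acts trivially on each $\mathscr{I}^k/\mathscr{I}^{k+1}$, and the explicit use of injectivity of precomposition with $\pi$ --- are points the paper asserts or leaves implicit, and they check out.
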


\begin{proof}
As $G$ acts trivially by conjugation on $\mathscr{I}/\mathscr{I}^2\!$ and $\mathscr{I}^2/\mathscr{I}^3\!$, it also acts trivially on $\Hom(\mathscr{I}/\mathscr{I}^2\!\!,\mathscr{I}^2/\mathscr{I}^3)$.
This means
$H^1\!(G,\Hom(\mathscr{I}/\mathscr{I}^2\!,\mathscr{I}^2/\mathscr{I}^3)) = \Hom(G,\Hom(\mathscr{I}/\mathscr{I}^2\!,\mathscr{I}^2/\mathscr{I}^3))$.
By Lemma~\ref{Lmbycomult}, the pullback of the class $\MD_{\univ}$ in $H^1(G,\Hom(\mathscr{I}/\mathscr{I}^2,\mathscr{I}^2/\mathscr{I}^3))$ maps to  the homomorphism $m$ under this identification. But by the definition of $A(G)$, its restriction to $G/Z(G)$, and hence to $G$, is trivial.
\end{proof}

We now define the universal Johnson class.

\begin{prop}\label{prop:Juniv}
  There exists a unique element $J_{\univ}$ in $H^1(\Out(G), A(G))$ whose image in $H^1(\Aut(G), A(G))$
  under the inflation map
  $$H^1(\Out(G), A(G))\lra H^1(\Aut(G), A(G))$$
  is the same as the image of\, $\MD_{\univ}$ under the map
  $$
  H^1\left(\Aut(G), \Hom\left(\mathscr{I}/\mathscr{I}^2,\mathscr{I}^2/\mathscr{I}^3\right)\right)\lra H^1(\Aut(G), A(G))
  $$
  induced by the quotient map $\Hom(\mathscr{I}/\mathscr{I}^2,\mathscr{I}^2/\mathscr{I}^3)\to A(G)$.
\end{prop}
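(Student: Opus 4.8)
The plan is to realize $J_{\univ}$ via the inflation-restriction (five-term Lyndon--Hochschild--Serre) exact sequence attached to the short exact sequence of profinite groups
$$1\to G/Z(G)\to \Aut(G)\to \Out(G)\to 1$$
with coefficients in $A(G)$. First I would record that the inner automorphism subgroup $G/Z(G)$ acts trivially on $A(G)$: indeed $G$ acts trivially by conjugation on both $\mathscr{I}/\mathscr{I}^2$ and $\mathscr{I}^2/\mathscr{I}^3$ (as already used in the proof of Proposition~\ref{basepoint-free-prop}), hence trivially on $\Hom(\mathscr{I}/\mathscr{I}^2,\mathscr{I}^2/\mathscr{I}^3)$ and therefore on the quotient $A(G)$. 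Consequently $A(G)^{G/Z(G)}=A(G)$, and the five-term sequence specializes to
$$0\to H^1(\Out(G),A(G))\xrightarrow{\operatorname{inf}} H^1(\Aut(G),A(G))\xrightarrow{\operatorname{res}} H^1(G/Z(G),A(G)),$$
exhibiting inflation as injective with image equal to $\ker(\operatorname{res})$.

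With this sequence in hand the proposition is immediate. Write $c\in H^1(\Aut(G),A(G))$ for the image of $\MD_{\univ}$ under the map induced by $\Hom(\mathscr{I}/\mathscr{I}^2,\mathscr{I}^2/\mathscr{I}^3)\to A(G)$. Proposition~\ref{basepoint-free-prop} says precisely that $\operatorname{res}(c)=0$, so $c\in\ker(\operatorname{res})=\operatorname{im}(\operatorname{inf})$; I would then take $J_{\univ}$ to be the unique preimage, existence coming from the identification of $\operatorname{im}(\operatorname{inf})$ with $\ker(\operatorname{res})$ and uniqueness from injectivity of inflation.

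The only point demanding care is to justify the continuous-cohomology version of the five-term sequence, which requires $A(G)$ to be a bona fide continuous $\Aut(G)$-module. I would verify this by noting that $\mathscr{I}/\mathscr{I}^2\cong G^{\mathrm{ab}}$ is finitely generated over $\mathbb{Z}_\ell$ (Proposition~\ref{PGabaug}) and that $\mathscr{I}^2/\mathscr{I}^3$ is a quotient of $(\mathscr{I}/\mathscr{I}^2)^{\otimes 2}$, so that $A(G)$ is a finitely generated $\mathbb{Z}_\ell$-module carrying the $\ell$-adic topology together with a continuous $\Aut(G)$-action. I do not expect a genuine obstacle here: once the module structure and the triviality of the inner action are in place, the argument is a formal consequence of inflation-restriction, and the main (mild) subtlety is simply invoking the exact sequence correctly in the profinite/continuous-cohomology setting rather than the abstract one.
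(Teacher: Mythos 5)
Your proposal is correct and matches the paper's proof: both invoke the inflation-restriction sequence for $1\to G/Z(G)\to\Aut(G)\to\Out(G)\to 1$ with coefficients in $A(G)$, use the triviality of the inner action on $A(G)$, and apply Proposition~\ref{basepoint-free-prop} to get existence and uniqueness from injectivity of inflation. Your extra remark on continuity of the $\Aut(G)$-module structure is a reasonable supplement but not a departure from the paper's argument.
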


\begin{proof}
The definition of $A(G)$ implies that the $G/Z(G)$-action on $A(G)$ is trivial. This means we have the following  inflation-restriction exact sequence in continuous group cohomology: 
$$0\lra H^1(\Out(G), A(G))\lra H^1(\Aut(G), A(G))\lra H^1(G/Z(G), A(G))^{\Out(G)}.$$  
By Proposition \ref{basepoint-free-prop}, the image of $\MD_{\univ}$ in $H^1(G/Z(G), A(G))^{\Out(G)}$ is zero, and thus there exists a unique element $J_{\univ}$ in $H^1(\Out(G), A(G))$ whose image in $H^1(\Aut(G), A(G))$ is the same as the image of $\MD_{\univ}$.
\end{proof}

\begin{defn}\label{DJuniv}
We call the element $J_{\univ}\in H^1(\Out(G), A(G))$ constructed in Proposition \ref{prop:Juniv} the \emph{universal Johnson class}.
\end{defn}

\begin{remark}
We call this class the Johnson class because in the case where $G$ is a discrete surface group, our construction is closely related to the Johnson homomorphism studied in \cite{Johnson} and the cocycle constructed by Morita in \cite{Morita}.
\end{remark}

\subsection{The coefficient groups for the modified diagonal and Johnson classes}\label{Scoeff}
The goal of this subsection is to identify a natural $\Aut(G)$-submodule $W$ of the group $\mathscr{I}^2/\mathscr{I}^3$ such that $\MD_{\univ}$  lives in the image of the natural map
$$H^1\left(\Aut(G), \Hom\left(\mathscr{I}/\mathscr{I}^2,W\right)\right) \lra H^1\left(\Aut(G),\Hom\left(\mathscr{I}/\mathscr{I}^2,\mathscr{I}^2/\mathscr{I}^3\right)\right)$$
for $\ell\neq 2$. Similarly, we will find a natural submodule $A_W(G)\subset A(G)$ so that $J_{\univ}$ is in the image of the natural map
$$
H^1(\Out(G), A_W(G))\lra H^1(\Out(G), A(G)).
$$ 

For $\ell=2$, we will prove similar results for $2^i\MD_{\univ}$ and $2^iJ_{\univ}$, where $i=1,2$ depending on the group-theoretic properties of $G$.

\subsubsection{Preliminaries on free pro-$\boldsymbol{\ell}$ groups}

\begin{lemma}\label{Lfreeprol}
Let $G$ be a free pro-$\ell$ group, freely generated by $g_1,g_2,\ldots,g_r$, and let $\mathscr{I}$ be the augmentation ideal of the completed group ring $\Z_\ell[[G]]$.
\begin{enumerate}
\item\label{Lfreeprol-1} For each of the generators $g_i$, let $x_i \colonequals g_i-1 \in \Z_\ell[[G]]$. Then
$\mathscr{I}/\mathscr{I}^2$ is a free $\Z_\ell$-module of rank~$r$ generated by the images of\, $x_1,x_2,\ldots,x_r$, and
 $\mathscr{I}^2/\mathscr{I}^3$ is free of rank $r^2$ with basis the images of $x_ix_j$.
    \item\label{Lfreeprol-2}
    Let $H$ be another finitely generated free pro-$\ell$ group, and let $f^{\ab} \colon G^{\ab} \rightarrow H^{\ab}$ be an isomorphism. Let  $h_1,h_2,\ldots,h_r$ be any set of lifts of $f^{\ab}(g_1), \ldots, f^{\ab}(g_r)$ from $H^{\ab}$ to $H$. Then $f(g_i) = h_i$ defines an isomorphism $f \colon G \rightarrow H$.
    \item\label{Lfreeprol-3} Let $\tilde{G}$ be a finitely generated pro-$\ell$ group with torsion-free abelianization. Let  $\pi \colon G \rightarrow \tilde{G}$ be a surjection such that the induced map  $\pi^{\ab} \colon G^{\ab} \rightarrow \tilde{G}^{\ab}$ is an isomorphism. Then any automorphism $\sigma_{\tilde{G}} \colon \tilde{G} \rightarrow \tilde{G}$ lifts to an automorphism $\sigma_G \colon G \rightarrow G$.
\end{enumerate}
\end{lemma}

\begin{proof}
  \eqref{Lfreeprol-1}~
 Since $G$ is free and $\Z_\ell[[G]]$ is complete with respect to the augmentation ideal, there is by \cite[Proposition 7, p.~I-7]{Serre62} an isomorphism
\begin{equation}\label{Enciso} \Z_\ell[[G]] \isor \Z_\ell \langle \langle x_1,x_2,\ldots,x_r \rangle \rangle_{\mathrm{nc}}, \end{equation}
where $\Z_\ell \langle \langle x_1,x_2,\ldots,x_r \rangle \rangle_{\mathrm{nc}}$ is the non-commutative power series ring in $r$ variables, such that $g_i$ is sent to $x_i+1$. The claim follows.

\eqref{Lfreeprol-2}~ Since $h_1,h_2,\ldots,h_r$ are elements of $H$ whose images topologically generate $H^\ab$, by \cite[Proposition~3.9.1]{Neukirch}, it follows that $h_1,h_2,\ldots,h_r$ also generate $H$. This shows that $f$ is a surjection. We will now show that these elements in fact freely topologically generate $H$, which proves that $f$ is an isomorphism.

Note that $f^{\ab}$ also induces an isomorphism 
\[ f^{\ab} \colon G^{\ab}/(G^{\ab})^\ell \lra H^{\ab}/(H^{\ab})^\ell. \]
Combining this with \cite[Proposition~3.9.1]{Neukirch} applied to $G$ and $H$, we get that the cardinalities of the minimal generating sets for these two groups are equal since they are equal to  $\dim_{\mathbb{F}_\ell} G^{\ab}/(G^{\ab})^\ell = \dim_{\mathbb{F}_\ell} H^{\ab}/(H^{\ab})^\ell$.
Since $g_1,g_2,\ldots,g_r$ is a minimal generating set for $G$, it follows that $h_1,h_2,\ldots,h_r$ is a minimal generating set for $H$.  By \cite[Proposition 3.9.4]{Neukirch}, there are thus no relations between the $h_i$; hence $f$ is injective, as desired.

 \eqref{Lfreeprol-3}~ Choose any homomorphism $f\colon G\to G$ lifting $\sigma_{\tilde{G}}$. That it is an isomorphism follows from the previous part applied with $G=H$ and $f^{\ab} = (\pi^{\ab})^{-1} \circ \sigma_{\tilde{G}}^{\ab} \circ \pi^{\ab}$. 
\end{proof}

\begin{defn}[Alternating tensors]\label{Dnatsub} Let $G$ be a finitely generated pro-$\ell$ group with torsion-free abelianization, and let $V \colonequals \mathscr{I}/\mathscr{I}^2$. Let
  $$\iota \colon V \otimes V \lra V \otimes V$$
  be the natural involution of the $\Aut(G)$-module $V \otimes V$ that acts on a simple tensor $v_1 \otimes v_2$ as $\iota (v_1 \otimes v_2) \colonequals v_2 \otimes v_1$. Let $\textup{Alt}^2\, V \subset V \otimes V$ be the $\Aut(G)$-submodule of alternating tensors, \textit{i.e.}, the maximal submodule where $\iota$ acts as multiplication by $-1$.

Let $W \subset \mathscr{I}^2/\mathscr{I}^3$ be the image of $\textup{Alt}^2\,V$ under the natural surjective multiplication map $V \otimes V \rightarrow \mathscr{I}^2/\mathscr{I}^3$. Note that by Lemma~\ref{Lmbycomult},  since $x \otimes y-y \otimes x$ is skew-symmetric, the image of the map 
\begin{align*}m \colon \mathscr{I}/\mathscr{I}^2 &\lra \Hom\left(\mathscr{I}/\mathscr{I}^2,\mathscr{I}^2/\mathscr{I}^3\right) \\ x &\longmapsto (y \longmapsto xy-yx)
\end{align*}
in Definition~\ref{Dm} is contained in $\Hom(\mathscr{I}/\mathscr{I}^2,W)$. Let $A_W(G):=\coker(m\colon \mathscr{I}/\mathscr{I}^2\to \Hom(\mathscr{I}/\mathscr{I}^2, W))$ be the cokernel of the commutator map.
\end{defn}

\begin{prop}\label{Pcohvan}
  Let $W \subset \mathscr{I}^2/\mathscr{I}^3$ be as in Definition~\ref{Dnatsub}. Suppose that there exists an element $\sigma\in \Aut(G)$ which acts on $G^{\ab}$ as multiplication by\, $-1$. Then the class $4\MD_{{\univ}}$ lies in the image of the natural map
  $$H^1\left(\Aut(G),\Hom\left(\mathscr{I}/\mathscr{I}^2,W\right)\right) \lra H^1\left(\Aut(G),\Hom\left(\mathscr{I}/\mathscr{I}^2,\mathscr{I}^2/\mathscr{I}^3\right)\right).$$
  If
  $$H^0\left(\Aut(G), \Hom\left(\mathscr{I}/\mathscr{I}^2,\mathscr{I}^2/\mathscr{I}^3\right)\otimes \mathbb{Z}_\ell/2\right)=0,$$
  then $2\MD_{{\univ}}$ has a unique preimage  under this map.
\end{prop}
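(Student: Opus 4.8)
The plan is to reduce both assertions to the short exact sequence of $\Aut(G)$-modules
\[
0 \to \Hom(V, W) \to \Hom(V, \mathscr{I}^2/\mathscr{I}^3) \to \Hom(V, (\mathscr{I}^2/\mathscr{I}^3)/W) \to 0,
\]
which is exact because $V = \mathscr{I}/\mathscr{I}^2$ is $\Z_\ell$-free (Proposition~\ref{PGabaug}), so $\Hom(V,-)$ is exact. Writing $N := \mathscr{I}^2/\mathscr{I}^3$ and $\bar N := N/W$, the associated long exact cohomology sequence shows that a multiple $c\cdot\MD_{\univ}$ lies in the image of $H^1(\Aut(G),\Hom(V,W))$ if and only if its image $c\cdot\overline{\MD}_{\univ}$ in $H^1(\Aut(G),\Hom(V,\bar N))$ vanishes, and that the preimage of such a class is unique precisely when $\ker(\iota_*)=0$, where $\iota_*$ is induced by $W\hookrightarrow N$; by exactness $\ker(\iota_*)=\operatorname{im}(\delta)$ for the connecting map $\delta\colon \Hom(V,\bar N)^{\Aut(G)}\to H^1(\Aut(G),\Hom(V,W))$. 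Thus the first assertion becomes ``$4\overline{\MD}_{\univ}=0$'' and the second becomes ``$\ker(\iota_*)=0$''.

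For the vanishing I would produce a canonical $\Aut(G)$-equivariant homomorphism $\lambda\colon V\to(\mathscr{I}/\mathscr{I}^3)/W$ lifting multiplication by a power of $2$, i.e.\ $\pi\circ\lambda = n\cdot\mathrm{id}_V$ where $\pi$ is the projection to $V$; such a $\lambda$ splits the pullback of the pushed-out extension along $[n]$, forcing $n\cdot\overline{\MD}_{\univ}=0$. The map I have in mind is the truncated logarithm $g\mapsto 2(g-1)-(g-1)^2 \bmod(\mathscr{I}^3,W)$. Writing $x=g-1$, $y=h-1$ and using $gh-1=x+y+xy$, a direct computation gives, modulo $\mathscr{I}^3$,
\[
\big(2(gh-1)-(gh-1)^2\big)-\big(2x-x^2\big)-\big(2y-y^2\big)\equiv xy-yx,
\]
and $xy-yx$ is the image of an alternating tensor under $V\otimes V\to\mathscr{I}^2/\mathscr{I}^3$, hence lies in $W$ by Definition~\ref{Dnatsub}. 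Therefore the assignment is a homomorphism into the abelian group $(\mathscr{I}/\mathscr{I}^3)/W$, factors through $G^{\mathrm{ab}}\cong V$, and satisfies $\pi\circ\lambda=2\cdot\mathrm{id}_V$; equivariance is automatic since both the logarithm and $W$ are intrinsic to $\Z_\ell[[G]]$. This yields the stronger statement $2\overline{\MD}_{\univ}=0$, so $2\MD_{\univ}$, a fortiori $4\MD_{\univ}$, lies in the image. (This route does not use the hypothesis that some $\sigma$ acts as $-1$ on $G^{\mathrm{ab}}$; that hypothesis is naturally replaced by the canonical involution below.)

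For uniqueness I would exploit the continuous $\Z_\ell$-linear anti-automorphism of $\Z_\ell[[G]]$ determined by $g\mapsto g^{-1}$. It preserves each $\mathscr{I}^n$ and acts on $V$ by $-1$, so it induces a canonical, hence $\Aut(G)$-equivariant, involution $\bar\iota$ of $N$ with $\bar\iota(\overline{xy})=\overline{yx}$; consequently $W=(1-\bar\iota)N$ and $\rho:=1+\bar\iota$ kills $W$, descending to an equivariant map $\bar\rho\colon\bar N\to N$ whose composite with $N\to\bar N$ is multiplication by $2$. Post-composing with $\bar\rho$ lifts $2f$ to $N$ for every invariant $f\colon V\to\bar N$, so $\operatorname{coker}\big(\Hom(V,N)^{\Aut(G)}\to\Hom(V,\bar N)^{\Aut(G)}\big)=\operatorname{im}(\delta)=\ker(\iota_*)$ is annihilated by $2$, i.e.\ $\ker(\iota_*)\subseteq H^1(\Aut(G),\Hom(V,W))[2]$. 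The Kummer sequence for multiplication by $2$ on $\Hom(V,W)$ gives a surjection $H^0(\Aut(G),\Hom(V,W)\otimes\Z_\ell/2)\twoheadrightarrow H^1(\Aut(G),\Hom(V,W))[2]$, and provided $W\otimes\Z_\ell/2\hookrightarrow N\otimes\Z_\ell/2$ the source embeds into $H^0(\Aut(G),\Hom(V,N)\otimes\Z_\ell/2)$, which vanishes by hypothesis. Hence $\ker(\iota_*)=0$ and the preimage of $2\MD_{\univ}$ is unique.

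The step I expect to be the main obstacle is the injectivity $W\otimes\Z_\ell/2\hookrightarrow N\otimes\Z_\ell/2$ used in the uniqueness argument, equivalently the vanishing of the boundary $\mathrm{Tor}_1^{\Z_\ell}(\bar N,\Z_\ell/2)\to W\otimes\Z_\ell/2$. This is transparent whenever $\bar N=N/W$ is $\Z_\ell$-torsion-free — in particular for free and surface pro-$\ell$ groups, where $N\cong V\otimes V$ and $\bar N\cong\mathrm{Sym}^2 V$ by Lemma~\ref{Lfreeprol} — but for a general $G$ with torsion-free abelianization one must rule out $2$-torsion in $\mathscr{I}^2/\mathscr{I}^3$ modulo $W$, and controlling this integral $2$-adic structure of the relations in $\mathscr{I}^2/\mathscr{I}^3$ is where the argument is most delicate.
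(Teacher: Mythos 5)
Your proposal is essentially correct but takes a genuinely different route from the paper's. The paper restricts $\MD_{\univ}$ to $T=\ker(\Aut(G)\to\Aut(V))$, shows by an explicit comultiplication/cocycle computation (Lemma~\ref{Lassemble}(2)) that this restriction already takes values in $\Hom(V,W)$, and then runs a diagram chase in the inflation--restriction ladder for $T\trianglelefteq\Aut(G)\twoheadrightarrow S$, using the element acting by $-1$ on $G^{\mathrm{ab}}$ to make $H^i(S,-)$ two-torsion; the two lifts across the $2$-torsion groups $H^1(S,-)$ and $H^2(S,-)$ are exactly where the constant $4$ comes from. You instead work directly with the long exact sequence of the coefficient sequence $0\to\Hom(V,W)\to\Hom(V,\mathscr{I}^2/\mathscr{I}^3)\to\Hom(V,\bar N)\to 0$ and kill the image of $\MD_{\univ}$ in $H^1(\Aut(G),\Hom(V,\bar N))$ by exhibiting the truncated logarithm $g\mapsto 2(g-1)-(g-1)^2$ as an $\Aut(G)$-equivariant lift of $2\cdot\mathrm{id}_V$. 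Your defect computation $\bigl(2(gh-1)-(gh-1)^2\bigr)-(2x-x^2)-(2y-y^2)\equiv xy-yx \bmod \mathscr{I}^3$ is correct, $xy-yx$ does lie in $W$, and equivariance is clear since the formula is intrinsic to $\Z_\ell[[G]]$; so you obtain $2\overline{\MD}_{\univ}=0$ unconditionally, which is stronger than the stated conclusion (it needs neither the factor $4$ nor the existence of $\sigma$ acting by $-1$) and is consistent with the paper's Corollary~\ref{cor:uniquepreimage}. Likewise your antipode involution, with $W=(1-\bar\iota)(\mathscr{I}^2/\mathscr{I}^3)$ and $1+\bar\iota$ descending to multiplication by $2$ on the quotient, is a clean intrinsic substitute for the paper's use of $-\mathrm{id}_V\in S$ in bounding $\ker(\iota_*)$ by $2$-torsion.

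The one real gap is the one you flag, and it is confined to the uniqueness step when $\ell=2$: to pass from ``$\ker(\iota_*)$ is killed by $2$'' to ``$\ker(\iota_*)=0$'' you need both the Bockstein surjection $H^0(\Aut(G),\Hom(V,W)\otimes\Z_\ell/2)\twoheadrightarrow H^1(\Aut(G),\Hom(V,W))[2]$, which requires $W$ to have no $2$-torsion, and the injectivity of $W\otimes\Z_\ell/2\to(\mathscr{I}^2/\mathscr{I}^3)\otimes\Z_\ell/2$, which requires $(\mathscr{I}^2/\mathscr{I}^3)/W$ to have no $2$-torsion. Neither is automatic for an arbitrary finitely generated pro-$2$ group with torsion-free abelianization (a central extension imposing $z^2=1$ on a commutator $z=[x,y]$ is expected to put $2$-torsion into $W$), though both hold in the free and surface cases used later in the paper, where $\mathscr{I}^2/\mathscr{I}^3$ and $\mathrm{Sym}^2V$ are free. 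Note that the paper's own diagram chase invokes essentially the same facts when it deduces the vanishing of $H^1(S,\Hom(V,W)^T)$ from the hypothesis on $\Hom(V,\mathscr{I}^2/\mathscr{I}^3)$, so your argument is at the same level of rigor there; to make either proof airtight for fully general $G$ one should add these $2$-torsion-freeness statements as hypotheses or verify them.
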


We will prove this proposition at the end of this subsection. Note that if $\ell\neq 2$, the proposition implies that $\MD_{\univ}$ itself is in the image of the map in question with a unique preimage.

An analogous result for $J_{\univ}$ will follow.

\begin{prop}\label{PJohnCoeff}
  Let $A_W(G)$ be the cokernel of the commutator map defined in Definition \ref{Dnatsub}.  Suppose that there exists an element $\sigma\in \Aut(G)$ which acts on $G^{\ab}$ as multiplication by\, $-1$.  Then the class $4J_{{\univ}}$ lies in the image of the natural map $H^1(\Out(G),A_W(G)) \rightarrow H^1(\Out(G),A(G))$. If
  $$H^0\left(\Out(G), \Hom\left(\mathscr{I}/\mathscr{I}^2,\mathscr{I}^2/\mathscr{I}^3\right)\otimes \mathbb{Z}_\ell/2\right)=0,$$
  the class $2J_{\univ}$ has a canonical preimage under this map, characterized as follows. Letting $\widetilde{\MD}_\univ$ be the unique preimage of $2\MD_\univ$ produced in Proposition \ref{Pcohvan}, the canonical preimage of $2J_\univ$ in $H^1(\Out(G), A_W(G))$ will be the unique element mapping to the image of\, $\widetilde{\MD}_\univ$ in $H^1(\Aut(G), A_W(G))$. 
\end{prop}

Before proving Proposition~\ref{Pcohvan}, we first need a lemma.

\begin{lemma}\label{Lassemble}
Let $G$ be a finitely generated pro-$\ell$ group with $\mathscr{I}/\mathscr{I}^2, \mathscr{I}^2/\mathscr{I}^3$ torsion-free. Let $V \colonequals \mathscr{I}/\mathscr{I}^2$, and let $W \subset \mathscr{I}^2/\mathscr{I}^3$ be as in Definition~\ref{Dnatsub}. Let $S$ be the image of the natural map $\Aut(G) \rightarrow \Aut(V)$, and let $T \colonequals \ker(\Aut(G) \rightarrow S)$. Then: 
\begin{enumerate}
\item\label{Step1} 
 Assume that $-\text{id}_V$  is in $S$. Then the group $H^i(S,\Hom(V,U))$ is $2$-torsion for any $\Aut(G)$-sub\-quo\-tient $U$ of $V \otimes V$ and any $i\in \mathbb{Z}_{\geq 0}$.
\item \label{Step1.5}
  Assume that $-\text{id}_V$  is in $S$. Then we have
  $$ H^1(S,M)\simeq H^0(S, M\otimes \mathbb{Z}_\ell/2)$$
  for any torsion-free $\Aut(G)$-subquotient $M$\! of\, $\Hom(V, V \otimes V)$.
\item\label{Step2} 
The image of the class 
$\MD_{{\univ}}$ under the restriction map
$$H^1\left(\Aut(G),\Hom\left(V,\mathscr{I}^2/\mathscr{I}^3\right)\right) \lra H^1\left(T,\Hom\left(V,\mathscr{I}^2/\mathscr{I}^3\right)\right)$$
lies in the image of the natural injective \footnote{The injectivity follows from the injectivity of $W \hookrightarrow \mathscr{I}^2/\mathscr{I}^3$ and the fact that $T$ acts trivially on $\Hom(V,W)$ and $\Hom(V,\mathscr{I}^2/\mathscr{I}^3)$.} map
$$H^1(T,\Hom(V,W)) \lra H^1\left(T,\Hom\left(V,\mathscr{I}^2/\mathscr{I}^3\right)\right).$$  
\end{enumerate}
\end{lemma}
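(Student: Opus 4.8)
The plan is to treat the three parts in turn, dispatching \ref{Step1} and \ref{Step1.5} by a single central-element trick and reserving the real work for \ref{Step2}. For part \ref{Step1}, the key observation is that $-\text{id}_V \in S$ is \emph{central} in $S$ (it commutes with all of $\Aut(V) \supseteq S$) and that it acts on $\Hom(V,U) \cong V^\vee \otimes U$ as multiplication by $-1$: it acts as $-1$ on $V^\vee$, while it acts as $(-1)^2 = +1$ on $V \otimes V$ and hence trivially on any subquotient $U$. I would then invoke the standard fact that for a central element $c$ of a profinite group $S$, the automorphism of $H^i(S,M)$ induced by the pair (conjugation by $c$, action of $c$ on $M$) is the identity. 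Here conjugation by $c$ is trivial and the action of $c = -\text{id}_V$ on $M = \Hom(V,U)$ is multiplication by $-1$, so functoriality forces $-1 = \text{id}$ on $H^i(S,M)$; hence $2H^i(S,M) = 0$.

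For part \ref{Step1.5} I would feed this into the long exact sequence in continuous cohomology attached to
$$0 \to \Hom(V,U) \xrightarrow{2} \Hom(V,U) \to \Hom(V,U)\otimes \mathbb{Z}_\ell/2 \to 0.$$
By part \ref{Step1} applied with $i = 0$ and $i = 1$, both $H^0$ and $H^1$ are killed by $2$, so the two multiplication-by-$2$ maps flanking $H^0(S,\Hom(V,U)\otimes \mathbb{Z}_\ell/2)$ vanish, leaving a short exact sequence $0 \to H^0(S,\Hom(V,U)) \to H^0(S,\Hom(V,U)\otimes \mathbb{Z}_\ell/2) \to H^1(S,\Hom(V,U)) \to 0$. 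The connecting map is then an isomorphism once $H^0(S,\Hom(V,U)) = \Hom_S(V,U)$ vanishes; this holds because any $S$-equivariant $f \colon V \to U$ satisfies $f(-v) = f(v)$ (testing against $c = -\text{id}_V$) and is therefore valued in $U[2]$, which is $0$ for the torsion-free modules $U$ (namely $W$ and $\mathscr{I}^2/\mathscr{I}^3$) relevant to our application.

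Part \ref{Step2} is the substantive one. Since $T$ acts trivially on $V$, it acts trivially on $V \otimes V$ and hence on every module $\Hom(V,U)$ in sight, so $H^1(T,-) = \Hom_{\mathrm{cont}}(T,-)$ and the map in question is simply the injection $\Hom(T,\Hom(V,W)) \hookrightarrow \Hom(T,\Hom(V,\mathscr{I}^2/\mathscr{I}^3))$. I would represent $\MD_{\univ}|_T$ by the homomorphism $\tau \mapsto (v \mapsto \tau\tilde{v} - \tilde{v})$ (up to an overall sign), where $\tilde{v} \in \mathscr{I}/\mathscr{I}^3$ is any lift of $v \in V$; this is the usual description of an extension class restricted to a subgroup acting trivially on the outer terms. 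Writing $v = g - 1$ with lift $\tilde{v} = g-1$, and setting $c \colonequals \tau(g)g^{-1} \in \overline{[G,G]}$ (which lies in the closed commutator subgroup since $\tau$ acts trivially on $G^{\mathrm{ab}} \cong V$), a short expansion gives $\tau\tilde{v} - \tilde{v} = g(c-1) \equiv c - 1 \pmod{\mathscr{I}^3}$. The claim thus reduces to showing $c - 1 \in W$ for every $c \in \overline{[G,G]}$. I would note that $c \mapsto c - 1 \bmod \mathscr{I}^3$ is a continuous homomorphism $\overline{[G,G]} \to \mathscr{I}^2/\mathscr{I}^3$ (because $(c-1)(c'-1) \in \mathscr{I}^4$), that $W$ is closed, and that on a topological generating set of commutators one computes $[a,b] - 1 \equiv (a-1)(b-1) - (b-1)(a-1) \pmod{\mathscr{I}^3}$, which is precisely the image under multiplication of the alternating tensor $(a-1)\otimes(b-1) - (b-1)\otimes(a-1)$ and hence lies in $W$ by Definition~\ref{Dnatsub}. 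It follows that the homomorphism representing $\MD_{\univ}|_T$ factors through $\Hom(V,W)$, as required.

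The main obstacle is the cocycle analysis of part \ref{Step2}: one must correctly identify the restricted extension class as an explicit homomorphism and then carry out the degree-two commutator expansion in $\Z_\ell[[G]]$ to recognize its output as an alternating tensor landing in $W$. Parts \ref{Step1} and \ref{Step1.5} are routine once the central element $-\text{id}_V$ is brought to bear.
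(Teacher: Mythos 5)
Your proof is correct, and the interesting divergence is in part~(\ref{Step2}). For parts~(\ref{Step1}) and~(\ref{Step1.5}) you have reconstructed essentially what the paper outsources to \cite[Lemma 5.4]{HM} and to the Bockstein sequence: the central element $-\text{id}_V$ acts as $-1$ on $\Hom(V,U)$ but induces the identity on $H^i(S,-)$, and the long exact sequence for multiplication by $2$ does the rest. You are in fact slightly more careful than the paper in observing that the connecting map is an isomorphism only once $H^0(S,\Hom(V,U))=0$, which you verify for the torsion-free coefficients actually used ($U=W$ and $U=\mathscr{I}^2/\mathscr{I}^3$); the paper's blanket assertion for arbitrary subquotients $U$ implicitly needs the same vanishing. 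For part~(\ref{Step2}) your route is genuinely different. The paper first treats the free pro-$\ell$ case, using the comultiplication $\Delta(g)=g\otimes g$ on $\Z_\ell[[G]]$ and the identity $\Delta(\sigma(x_i))=(\sigma\otimes\sigma)(\Delta(x_i))$ to show that the coefficients $b_i^{kl}(\sigma)$ in $\sigma(x_i)=x_i+\sum b_i^{kl}(\sigma)x_kx_l$ are skew-symmetric, and then reduces the general case to the free case by lifting automorphisms along a presentation (Lemma~\ref{Lfreeprol}(3)) and using that $W_G$ surjects onto $W_{\tilde G}$. You instead identify the restricted cocycle as $\tau\mapsto\bigl(g-1\mapsto c-1\bigr)$ with $c=\tau(g)g^{-1}\in\overline{[G,G]}$ (using Proposition~\ref{PGabaug} to see that $\tau\in T$ forces $c$ into the closed commutator subgroup and that $c-1\in\mathscr{I}^2$), and then show directly that $c\mapsto c-1\bmod\mathscr{I}^3$ is a continuous homomorphism $\overline{[G,G]}\to\mathscr{I}^2/\mathscr{I}^3$ landing in the closed submodule $W$, because it does so on the topological generators $[a,b]$ via $[a,b]-1\equiv(a-1)(b-1)-(b-1)(a-1)$. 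This buys you a uniform argument for arbitrary $G$ with no free-group reduction step, at the price of losing the tidy coordinate description of $\sigma\bmod\mathscr{I}^3$ that the comultiplication computation produces in the free case. Two small remarks: with your convention $c=\tau(g)g^{-1}$ the expansion gives $\tau(g)-g=(c-1)g$ rather than $g(c-1)$, but both are congruent to $c-1$ modulo $\mathscr{I}^3$ so nothing is affected; and you should note explicitly that the values on the elements $g-1$ determine the $\Z_\ell$-linear cocycle because these elements topologically generate $V$ and $W$ is closed.
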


\begin{remark}\label{Rtorsfreeab}\leavevmode
\begin{enumerate}
    \item The assumption in Lemma~\ref{Lassemble}\eqref{Step1} is satisfied by finitely generated free pro-$\ell$ groups and pro-$\ell$ surface groups (\textit{i.e.}, the pro-$\ell$ completion of the fundamental group of a genus $g$ Riemann surface). Indeed, Lemma~\ref{Lfreeprol}\eqref{Lfreeprol-2} implies that $S=\Aut(V)$ in the first case, and  \cite[Proposition~1]{AsKa} shows that $S \cong \GSp_{2g}(\Z_\ell)$ in the second case.
    \item By the above remark and direct computation, the hypothesis that
      $$H^0\left(\Aut(G), \Hom\left(\mathscr{I}/\mathscr{I}^2, \mathscr{I}^2/\mathscr{I}^3\right)\otimes \mathbb{Z}_\ell/2\right)=0$$
      in Propositions \ref{Pcohvan} and \ref{PJohnCoeff} is satisfied for finitely generated free pro-$\ell$ groups and for pro-$\ell$ surface groups. 
    \item Note that the statement of Lemma~\ref{Lassemble}\eqref{Step2} is a pro-$\ell$ version of Johnson's theorem \cite{Johnson} on the mapping class group of a Riemann surface with a marked point. 

\end{enumerate}
\end{remark}

\begin{proof}[Proof of Lemma~\ref{Lassemble}]
\eqref{Step1}~ The proof is the same as that of \cite[Lemma 5.4]{HM}.

\eqref{Step1.5}~ This is again similar to that of \cite[Lemma 5.4]{HM}; it is immediate from the Bockstein sequence associated to the short exact sequence
$$
0\lra M\overset{\cdot 2}{\lra} M\lra M\otimes \mathbb{Z}_\ell/2\lra 0.
$$

\eqref{Step2}~
We first prove the result in the case where $G$ is a finitely generated free pro-$\ell$ group. Then we will reduce to this case.

\emph{The case that $G$ is a finitely generated free pro-$\ell$ group, generated by $g_1, \ldots, g_r$.} 
Let
$$
\Delta \colon \Z_\ell[[G]] \lra \Z_\ell[[G]] \otimes \Z_\ell[[G]]
$$
denote the comultiplication map of the group ring $\Z_\ell[[G]]$, \textit{i.e.}, the map defined by
$$\Delta\colon g\longmapsto g\otimes g$$
for $g\in G$ and extended linearly.  
By Lemma \ref{Lfreeprol}\eqref{Lfreeprol-1}, the set
$$\left\{x_1,\ldots,x_r,x_1^2,x_1x_2,\ldots,x_rx_{r-1}, x_r^2\right\}$$
is a $\Z_\ell$-basis for $\mathscr{I}/\mathscr{I}^3$. As any $\sigma \in T$ preserves $\mathscr{I}$ and fixes $\mathscr{I}/\mathscr{I}^2$, there exist unique elements $b_i^{kl}(\sigma)\in\Z_\ell$ such that
\begin{equation}\label{Eautexp} \sigma(x_i) = x_i + \sum_{kl} b_i^{kl}(\sigma) x_k x_l \mod \mathscr{I}^3. \end{equation}

From the  commutative diagram
\begin{center}
    \begin{tikzcd}
\Z_\ell[[G]]  \arrow[r,"\Delta"] \arrow[d,"\sigma"] & \Z_\ell[[G]] \otimes \Z_\ell[[G]] \arrow[d,"\sigma \otimes \sigma"] \\
\Z_\ell[[G]] \arrow[r,"\Delta"] & \Z_\ell[[G]] \otimes \Z_\ell[[G]], 
\end{tikzcd}
\end{center}
for every $i$ we have 
\begin{equation}\label{Ecommute} \Delta(\sigma(x_i)) = (\sigma \otimes \sigma) (\Delta(x_i)). \end{equation}
We now compute both sides of this equality. Since $\Delta(g_i) = g_i \otimes g_i$ for all the generators $g_i$, we can compute that
\begin{equation}\label{Elieelements} \Delta(x_i) = \Delta(g_i-1) = (x_i+1) \otimes (x_i + 1) - 1 = x_i \otimes x_i + 1 \otimes x_i + x_i \otimes 1 \end{equation}
for the corresponding generators $x_i=g_i-1$ of the augmentation ideal $\mathscr{I}$. Since $\Delta$ is a ring homomorphism, we also have
\begin{equation}\label{Emultiplicative} \Delta(x_k x_l) = \Delta(x_k) \Delta(x_l) \end{equation}
for every pair of indices $k,l$. Combining \eqref{Eautexp}, \eqref{Elieelements}, \eqref{Emultiplicative} with \eqref{Ecommute} and comparing coefficients of $x_k x_l$ on both sides gives
\begin{align}\label{Ecocycskew}
    b_i^{kl}(\sigma) + b_i^{lk}(\sigma) = 0&\quad \text{if } k \neq l, \\
    2b_i^{kk}(\sigma) = 0 &\quad\text{if } k= l
\end{align}
or, equivalently by Definition~\ref{Dnatsub}, that 
\begin{equation}\label{Eskewsym} \sum_{kl} b_i^{kl}(\sigma) x_k x_l \in W \quad \text{ for every } i.
\end{equation}
Finally, explicit computation gives that
$$\MD_{\univ}|_T \in H^1\left(T,\Hom\left(\mathscr{I}/\mathscr{I}^2,\mathscr{I}^2/\mathscr{I}^3\right)\right)$$
is represented by the cocycle
\begin{equation}\label{Ecocyclerep}
  \sigma \longmapsto\left(x_i \longmapsto  \sum_{kl} b_i^{kl}(\sigma) x_k x_l\right) \mod \mathscr{I}^3.
\end{equation}
Combining this with \eqref{Eskewsym}, we get that the explicit cocycle~\eqref{Ecocyclerep} representing $\MD_{\univ}$ restricted to $T$ is visibly in the image of the map
\[ H^1(T,\Hom(V,W)) \lra H^1(T,\Hom(V,V\otimes V)). \qedhere \]

\emph{Reduction to the case that $G$ is free pro-$\ell$.} We now let $\tilde G$ be an arbitrary finitely generated pro-$\ell$ group with torsion-free abelianization. Let $G$ be a free pro-$\ell$ group, and view 
$$\pi\colon G\lra \tilde G$$
as a surjection inducing an isomorphism on abelianizations. Let $T_G\subset \Aut(G)$ be the subgroup consisting of automorphisms of $G$ which descend to automorphisms of $\tilde G$ and act trivially on $G^{\ab}$. Let $T_{\tilde G}\subset \Aut(\tilde G)$ be the subgroup acting trivially on $\tilde G^{\ab}$. By Lemma \ref{Lfreeprol}\eqref{Lfreeprol-3}, the natural map $T_G\to T_{\tilde G}$ is surjective. 

Since $T_{\tilde G}$ acts trivially on $\Hom(V, \mathscr{I}_{\tilde G}^2/\mathscr{I}_{\tilde G}^3)$, we may rewrite
$$H^1\left(T_{\tilde G}, \Hom\left(V, \mathscr{I}_{\tilde G}^2/\mathscr{I}_{\tilde G}^3\right)\right)=\Hom\left(T_{\tilde G}, \Hom\left(V, \mathscr{I}_{\tilde G}^2/\mathscr{I}_{\tilde G}^3\right)\right);$$
we wish to show that the homomorphism in question factors through $\Hom(V, W_{\tilde G})$. But this is immediate for the analogous fact for $G$, combined with the fact that $W_G$ surjects onto $W_{\tilde G}$, by definition.
\end{proof}

\begin{proof}[Proof of Proposition~\ref{Pcohvan}]
Let $V$, $S$, $T$ be as in Lemma~\ref{Lassemble}. Note that as $\mathscr{I}^2/\mathscr{I}^3$ is assumed to be torsion-free in Definition \ref{Dnatsub}, $W$ and hence $\Hom(V, W)$ are torsion-free.

Using the inflation-restriction sequence for the exact sequence of groups
\[ 0 \lra T \lra \Aut(G) \lra S \lra 0, \]
we get  the following commutative diagram of exact sequences:
$$\resizebox{\hsize}{!}{
\xymatrix@C=1em{
H^1\left(S,\Hom(V,W)^T\right) \ar[r]\ar[d] & H^1(\Aut(G),\Hom(V,W)) \ar[r]\ar[d] & H^1(T,\Hom(V,W))^S \ar[r] \ar@{^{(}->}[d] & H^2\left(S,\Hom(V,W)^T\right) \ar[d] \\
H^1\left(S,\Hom\left(V,\mathscr{I}^2/\mathscr{I}^3\right)^T\right) \ar[r] & H^1\left(\Aut(G),\Hom\left(V,\mathscr{I}^2/\mathscr{I}^3\right)\right) \ar[r] & H^1\left(T,\Hom\left(V,\mathscr{I}^2/\mathscr{I}^3\right)\right)^S\ar[r] & H^2\left(S, \Hom\left(V,\mathscr{I}^2/\mathscr{I}^3\right)^T\right).
}
}$$

Lemma~\ref{Lassemble}(\ref{Step1}) shows that $H^i(S,\Hom(V,W)^T)$ and $H^i(S,\Hom(V,\mathscr{I}^2/\mathscr{I}^3)^T)$ are 2-torsion. The image of $\MD_\univ$ in $H^1(T, \Hom(V, \mathscr{I}^2/\mathscr{I}^3))^S$ lies in the image of the natural map
$$H^1\left(T,\Hom(V,W)\right)^S\lra H^1\left(T, \Hom\left(V, \mathscr{I}^2/\mathscr{I}^3\right)\right)^S$$
by Lemma \ref{Lassemble}(\ref{Step2}); a diagram chase now yields the claim about $4\MD_\univ$. 

Now if
$$H^0\left(\Aut(G), \Hom\left(\mathscr{I}/\mathscr{I}^2,\mathscr{I}^2/\mathscr{I}^3\right)\otimes \mathbb{Z}_\ell/2\right)=0,$$
Lemma~\ref{Lassemble}(\ref{Step1.5}) implies that
$$H^1\left(S,\Hom(V,W)^T\right)=H^1\left(S,\Hom\left(V,\mathscr{I}^2/\mathscr{I}^3\right)^T\right)=0.$$
Now a diagram chase finishes the proof.  
\end{proof}

\begin{proof}[Proof of Proposition \ref{PJohnCoeff}]
The claim about $4J_\univ$ is immediate from Proposition \ref{Pcohvan}; we now explain the claim about $2J_\univ$. We have a commutative diagram
$$\xymatrix{
H^1\left(\Aut(G),\Hom\left(\mathscr{I}/\mathscr{I}^2,W\right)\right) \ar[r] \ar[d]&  H^1\left(\Aut(G),\Hom\left(\mathscr{I}/\mathscr{I}^2,\mathscr{I}^2/\mathscr{I}^3\right)\right) \ar[d]\\
H^1(\Aut(G),A_W(G)) \ar[r] &  H^1(\Aut(G),A(G))  \\
H^1(\Out(G),A_W(G))\ar[r] \ar[u]&  H^1(\Out(G),A(G)). \ar[u]
}$$
Letting $\widetilde{\MD}_\univ\in H^1(\Aut(G),\Hom(\mathscr{I}/\mathscr{I}^2,W))$ be the unique preimage of $2\MD_\univ$ produced by Proposition \ref{Pcohvan}, it suffices to show that the image of $\widetilde{\MD}_\univ$ in $H^1(\Aut(G),A_W(G))$ has a unique preimage in $H^1(\Out(G), A_W(G))$. But this follows analogously to the proof of Proposition \ref{prop:Juniv}.
\end{proof}

As a consequence of Remark \ref{Rtorsfreeab}, we have the following. 

\begin{cor}\label{cor:uniquepreimage}
  Suppose $G$ is a finitely generated free pro-$\ell$ group or a pro-$\ell$ surface group. Then $2\MD_{\univ}$  has a unique preimage $\widetilde{\MD}$ under the natural map
  $$H^1\left(\Aut(G),\Hom\left(\mathscr{I}/\mathscr{I}^2,W\right)\right) \lra H^1\left(\Aut(G),\Hom\left(\mathscr{I}/\mathscr{I}^2,\mathscr{I}^2/\mathscr{I}^3\right)\right).$$
  Moreover, $2J_{\univ}$ has a canonical preimage $\widetilde J$ under the natural map
  $$H^1(\Out(G),A_W(G))\lra H^1(\Out(G), A(G)).$$
\end{cor}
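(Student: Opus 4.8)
The plan is to obtain the corollary as a direct application of the uniqueness clauses of Propositions~\ref{Pcohvan} and~\ref{PJohnCoeff}: for a finitely-generated free pro-$\ell$ group or a pro-$\ell$ surface group, I would simply verify the two standing hypotheses of those propositions and then quote their conclusions. Each proposition requires (i) an element $\sigma \in \Aut(G)$ acting on $G^{\mathrm{ab}}$ as $-\mathrm{id}$, and (ii) the vanishing of $H^0(\Aut(G), \Hom(\mathscr{I}/\mathscr{I}^2, \mathscr{I}^2/\mathscr{I}^3)\otimes \mathbb{Z}_\ell/2)$. These are precisely the two numbered items of Remark~\ref{Rtorsfreeab}, so the heart of the argument is just to record that Remark~\ref{Rtorsfreeab} applies to both families.

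For (i), recall that the existence of such a $\sigma$ is equivalent to $-\mathrm{id}_V$ lying in the image $S$ of $\Aut(G) \to \Aut(V)$, where $V = \mathscr{I}/\mathscr{I}^2 \cong G^{\mathrm{ab}}$ by Proposition~\ref{PGabaug}. By Remark~\ref{Rtorsfreeab}(1), in the free case Lemma~\ref{Lfreeprol}(2) gives $S = \Aut(V)$, which plainly contains $-\mathrm{id}_V$, and in the surface case \cite{AsKa} gives $S \cong \GSp_{2g}(\mathbb{Z}_\ell)$; since $-\mathrm{id}_V$ scales the symplectic pairing by $(-1)^2 = 1$ it lies in the similitude group $\GSp_{2g}(\mathbb{Z}_\ell) = S$. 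Thus (i) holds in both cases. For (ii), I would note first that the $\Aut(G)$-action on $\Hom(\mathscr{I}/\mathscr{I}^2, \mathscr{I}^2/\mathscr{I}^3)$ factors through $\Out(G)$ (the inner action is trivial, by the discussion in Proposition~\ref{J-trivial-on-G}), so the $\Aut(G)$- and $\Out(G)$-invariants coincide and it suffices to check the $\Aut(G)$-statement; this vanishing is exactly Remark~\ref{Rtorsfreeab}(2), automatic for odd $\ell$ (where $\mathbb{Z}_\ell/2 = 0$) and a finite invariant-theory computation over $\mathbb{F}_2$ for $\ell = 2$ using the explicit description of $S$.

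With (i) and (ii) in hand, Proposition~\ref{Pcohvan} yields the unique preimage $\widetilde{\MD}$ of $2\MD_{\univ}$ under the map $H^1(\Aut(G),\Hom(\mathscr{I}/\mathscr{I}^2,W)) \to H^1(\Aut(G),\Hom(\mathscr{I}/\mathscr{I}^2,\mathscr{I}^2/\mathscr{I}^3))$, and Proposition~\ref{PJohnCoeff} yields the unique preimage $\widetilde{J}$ of $2J_{\univ}$ under $H^1(\Out(G),A_W(G)) \to H^1(\Out(G),A(G))$, which is the assertion. The only real obstacle is bookkeeping: matching the hypotheses of the two propositions to the two items of Remark~\ref{Rtorsfreeab} for both the $\MD$- and $J$-versions, and confirming that the $\Out(G)$-level uniqueness for $J$ is governed by the same invariant-vanishing as the $\Aut(G)$-level statement for $\MD$. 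No new construction or estimate is required beyond citing these established facts.
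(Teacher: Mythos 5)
Your proposal is correct and follows exactly the paper's route: the paper states this corollary as an immediate consequence of Remark~\ref{Rtorsfreeab}, which records that both hypotheses of Propositions~\ref{Pcohvan} and~\ref{PJohnCoeff} (existence of $\sigma$ acting as $-\mathrm{id}$ on $G^{\mathrm{ab}}$, and the vanishing of the relevant $H^0$ modulo $2$) hold for free pro-$\ell$ groups and pro-$\ell$ surface groups. Your extra details --- checking that $-\mathrm{id}_V$ lies in $\GSp_{2g}(\mathbb{Z}_\ell)$ and that the $\Aut(G)$- and $\Out(G)$-invariants agree since the inner action is trivial --- are correct fillings-in of steps the paper leaves implicit.
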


\subsection{Ceresa classes of curves in $\boldsymbol{\ell}$-adic cohomology}\label{SCerFunGrp}
Let $X$ be a curve over $K$, and let $\ell$ be a prime different from the characteristic of $K$. For $\bar x$ a geometric point of $X$,  let 
$$o_\ell\colon \textup{Gal}\left(\bar K/K\right)\lra \Out\left(\pi_1^{\ell}(X_{\bar K}, \bar x)\right)$$
be the map coming from the natural outer action of $\textup{Gal}(\bar K/K)$ on $\pi_1^{\text{\'et}}(X_{\bar K}, \bar x)$; here $\pi_1^{\ell}(X_{\bar K}, \bar x)$ is the pro-$\ell$ completion of $\pi_1^{\text{\'et}}(X_{\bar K}, \bar x)$. Note that $\Out(\pi_1^{\ell}(X_{\bar K}, \bar x))$ is independent of $\bar x$. If $y\in X(K)$ is a rational point and $\bar y$ the geometric point obtained by some choice of algebraic closure $K \hookrightarrow \bar{K}$, we let
$$a_{\ell, y}\colon \textup{Gal}\left(\bar K/K\right)\lra \Aut\left(\pi_1^{\ell}(X_{\bar K}, \bar y)\right)$$
be the map induced by the canonical Galois action on $\pi_1^{\text{\'et}}(X_{\overline{K}}, \bar y)$.

\begin{defn}\label{DladicCeresa}
The \emph{modified diagonal class $\MD(X,\bar y)$ of the pointed curve $(X,\bar y)$} is the pullback $a_{\ell,y}^*\MD_{\univ}$ of the group-theoretic modified diagonal class $\MD_{\univ}$ for the group $\pi_1^{\ell}(X_{\bar K}, \bar y)$ defined in Definition~\ref{DuniC}; it depends on the choice of the rational base point $y$.
 The \emph{Johnson class $J(X)$ of the curve $X$} is the pullback $o_\ell^*J_{\univ}$ of the group-theoretic Johnson class $J_{\univ}$ 
 for the group $\pi_1^{\ell}(X_{\bar K}, \bar x)$ defined in Definition~\ref{DJuniv}; it is by definition independent of the choice of geometric point $\bar x$. 
\end{defn}

\begin{remark}\label{rem:tilde-classes}
Similarly, one may define classes $\widetilde{\MD}(X, b)$ and $\widetilde{J}(X)$ by pulling back the classes $\widetilde{\MD}$ and $\widetilde{J}$ of Corollary \ref{cor:uniquepreimage}. Note that in general some $2$-torsion information is lost when passing from $\MD$ to $\widetilde{\MD}$ (resp.~$J$ to $\widetilde{J}$).
\end{remark}

\subsubsection{Comparison to the Ceresa classes in \cite{HM}}\label{projective-subsubsection}
For the rest of Section~\ref{SCerFunGrp}, 
we consider the case where $X$ is a smooth, projective, and geometrically integral curve of genus $g$ over a field $K$, with a rational point $b \in X(K)$. We let $G$ be the pro-$\ell$ \'etale fundamental group $\pi_1^\ell(X \otimes \overline{K}, \overline{b})$ and let $\mathscr{I}$ be the augmentation ideal in $\mathbb{Z}_\ell[[G]]$, as before. 
The purpose of this paragraph
is to compare the classes $\MD(X,b)$ and $J(X)$ to the classes $\mu(X,b)$ and $\nu(X)$ defined in \cite{HM} arising from the Ceresa cycle. Explicitly, we show $\widetilde{\MD}(X,b) = \mu(X,b)$ and $\widetilde{J}(X) = \nu(X)$. For a comparison between the extension classes of mixed Hodge structures arising from the modified diagonal cycle and the Ceresa cycle, see \cite[Section 1]{DarmonRotgerSols}.

\begin{lemma}\label{LPoincaredu}
  There are canonical isomorphisms of Galois-modules
  \begin{equation}\label{Ealliso} \mathscr{I}/\mathscr{I}^2 \simeq G^{\ab} \simeq H^1_{\textup{\'{e}t}}(X_{\bar K},\mathbb{Z}_\ell)^\vee.
\end{equation}
\end{lemma}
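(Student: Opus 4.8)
The first isomorphism requires essentially no new work: it is exactly the map $\phi\colon g \mapsto g-1$ of Proposition~\ref{PGabaug}, and the plan is only to check that it is Galois-equivariant. Since the Galois action on $G = \pi_1^\ell(X \otimes \overline{K}, \overline{b})$ induced by the rational point $b$ (via the map $a_{\ell,b}$ of the previous section) is by continuous automorphisms, and $\phi$ is defined functorially in terms of the group ring $\mathbb{Z}_\ell[[G]]$ and its augmentation ideal, the map $\phi$ commutes with the $\Gal(\bar K/K)$-action on both sides. Thus $\phi$ descends to a Galois-equivariant isomorphism $G^{ab} \xrightarrow{\sim} \mathscr{I}/\mathscr{I}^2$.

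For the second isomorphism the plan is to pass through $H^1$ rather than $H_1$, which sidesteps any bookkeeping with Tate twists. The key input is the standard identification, Galois-equivariant by functoriality of \'etale cohomology in $X_{\bar K}$,
\[
H^1_{\textup{\'{e}t}}(X_{\bar K}, \mathbb{Z}_\ell) \;\simeq\; \Hom_{\mathrm{cont}}(\pi_1^{\text{\'et}}(X_{\bar K}, \bar b), \mathbb{Z}_\ell),
\]
obtained by writing $H^1(X_{\bar K}, \mathbb{Z}/\ell^n) = \Hom(\pi_1^{\text{\'et}}, \mathbb{Z}/\ell^n)$ for each $n$ and taking the inverse limit (the relevant $\varprojlim^1$ terms vanish as the cohomology groups are finitely generated). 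Since $\mathbb{Z}_\ell$ is abelian and pro-$\ell$, every continuous homomorphism from $\pi_1^{\text{\'et}}$ factors through its maximal pro-$\ell$ abelian quotient $G^{ab}$, so the right-hand side is $\Hom_{\mathbb{Z}_\ell}(G^{ab}, \mathbb{Z}_\ell) = (G^{ab})^\vee$. Under our hypotheses $G^{ab}$ is a free $\mathbb{Z}_\ell$-module of finite rank ($2g$ for a genus $g$ curve), so dualization is exact and the canonical biduality map is an isomorphism; dualizing the identification $H^1_{\textup{\'{e}t}}(X_{\bar K}, \mathbb{Z}_\ell) \simeq (G^{ab})^\vee$ therefore yields the desired Galois-equivariant isomorphism $G^{ab} \simeq H^1_{\textup{\'{e}t}}(X_{\bar K}, \mathbb{Z}_\ell)^\vee$.

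The only points to verify with care are (i) that there is no Tate twist and (ii) the Galois-equivariance of the comparison isomorphism. Regarding (i), this is precisely why I route the argument through $H^1$ with constant coefficients, where the comparison $H^1(X_{\bar K}, \mathbb{Z}/\ell^n) = \Hom(\pi_1^{\text{\'et}}, \mathbb{Z}/\ell^n)$ is twist-free, rather than through the more commonly quoted identification of $G^{ab}$ with the Tate module $T_\ell \Jac(X)$, which does introduce a twist. Regarding (ii), the point is that $\Gal(\bar K/K)$ acts compatibly on $X_{\bar K}$, on $H^1_{\textup{\'{e}t}}(X_{\bar K}, \mathbb{Z}_\ell)$, and on $\pi_1^{\text{\'et}}(X_{\bar K}, \bar b)$ (the latter as a genuine action because $b$ is rational), and each of the maps above is natural in the curve and in the group, hence equivariant. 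I expect (i) to be the subtlest place to slip, while the biduality of $G^{ab}$ is what guarantees that the final isomorphism is canonical rather than merely an abstract isomorphism of $\mathbb{Z}_\ell$-modules.
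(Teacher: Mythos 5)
Your proposal is correct and matches the paper's approach: the paper simply cites Proposition~\ref{PGabaug} for the first isomorphism and \cite[Example~11.3]{Milne} for the second, and your argument is precisely a (correct) unwinding of those two citations, including the Galois-equivariance checks and the twist-free identification $H^1_{\textup{\'et}}(X_{\bar K},\mathbb{Z}_\ell)\simeq\Hom_{\mathrm{cont}}(\pi_1^{\text{\'et}},\mathbb{Z}_\ell)$ followed by biduality. No gaps.
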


\begin{proof}
See Proposition~\ref{PGabaug} for the first isomorphism and \cite[Example~11.3]{Milne} for the second isomorphism. \qedhere
\end{proof}

\begin{lemma}\label{LImodI2}
  Let $H \colonequals \mathscr{I}/\mathscr{I}^2$, and let
  $$\omega\colon\mathbb{Z}_\ell(1)\lra H^{\otimes 2}$$
  be the map dual to the cup product $$H^1(X_{\overline{K}}, \mathbb{Z}_\ell)\otimes H^1(X_{\overline{K}}, \mathbb{Z}_\ell)\lra H^2(X_{\overline{K}}, \mathbb{Z}_\ell) \simeq \mathbb{Z}_\ell(-1)$$
under the identification from Lemma~\ref{LPoincaredu}.
Then we have an exact sequence
\[ 0 \lra \Z_\ell(1) \overset{\omega}{\lra} H^{\otimes 2} \lra \mathscr{I}^2/\mathscr{I}^3 \lra 0, \]
where the rightmost map is the natural multiplication map. 
\end{lemma}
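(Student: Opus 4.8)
The plan is to exploit the fact that $G$ is a one-relator pro-$\ell$ group and to compute the kernel of the multiplication map directly on the level of group rings, then to match that kernel with the image of $\omega$ using Poincar\'e duality.

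First I would choose standard topological generators $a_1,b_1,\dots,a_g,b_g$ of $G$ with the single defining relation $R\colonequals \prod_{i=1}^g[a_i,b_i]=1$, and realize $G=F/N$ where $F$ is the free pro-$\ell$ group on the $a_i,b_i$ and $N$ is the closed normal subgroup generated by $R$. Writing $J\subset \mathbb{Z}_\ell[[F]]$ for the closed two-sided ideal generated by $R-1$, one has $\mathbb{Z}_\ell[[G]]=\mathbb{Z}_\ell[[F]]/J$ (the identity $uv-1=(u-1)v+(v-1)$ shows $n-1\in J$ for every $n\in N$, and the reverse inclusion is clear). Since $R$ is a product of commutators, $R-1\in\mathscr{I}_F^2$, and hence $J\subseteq \mathscr{I}_F^2$; consequently $\mathscr{I}_G=\mathscr{I}_F/J$ and $\mathscr{I}_G^2/\mathscr{I}_G^3\cong \mathscr{I}_F^2/(\mathscr{I}_F^3+J)$. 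By Lemma~\ref{Lfreeprol}(1) the free-group multiplication map identifies $\mathscr{I}_F^2/\mathscr{I}_F^3$ with $H^{\otimes 2}$, and by naturality of multiplication along $\mathbb{Z}_\ell[[F]]\to\mathbb{Z}_\ell[[G]]$ the map in the statement is exactly the resulting surjection $H^{\otimes 2}\twoheadrightarrow \mathscr{I}_G^2/\mathscr{I}_G^3$, whose kernel is the image of $J$ in $H^{\otimes 2}$.

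Next I would compute this image. Any element of $J$ is a limit of sums $\sum_k c_k(R-1)d_k$; writing $c_k,d_k$ as their constant terms plus elements of $\mathscr{I}_F$ and using that $R-1$ already lies in $\mathscr{I}_F^2$, every contribution involving a positive-degree part of some $c_k$ or $d_k$ lands in $\mathscr{I}_F^3$, so modulo $\mathscr{I}_F^3$ the image of $J$ is the $\mathbb{Z}_\ell$-span of the leading term of $R-1$. The commutator computation already carried out in the proof of Lemma~\ref{Lmbycomult} gives $[a,b]-1\equiv (a-1)(b-1)-(b-1)(a-1)\bmod \mathscr{I}_F^3$, so, writing $x_i=a_i-1$ and $y_i=b_i-1$,
\[ R-1\equiv \sum_{i=1}^g\bigl(x_iy_i-y_ix_i\bigr)\bmod \mathscr{I}_F^3. \]
Thus $\ker\bigl(H^{\otimes 2}\to \mathscr{I}_G^2/\mathscr{I}_G^3\bigr)=\mathbb{Z}_\ell\cdot\rho$ with $\rho\colonequals \sum_i(x_i\otimes y_i-y_i\otimes x_i)$. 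As $\{x_i,y_i\}$ is a $\mathbb{Z}_\ell$-basis of $H$, the vector $\rho$ is primitive, so $1\mapsto\rho$ is a split injection $\mathbb{Z}_\ell\hookrightarrow H^{\otimes 2}$ and the cokernel $\mathscr{I}_G^2/\mathscr{I}_G^3$ is free of rank $(2g)^2-1$.

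Finally I would identify this kernel with $\operatorname{im}(\omega)$ as Galois modules. The Galois action on $(X_{\bar K},\bar b)$ makes $\mathbb{Z}_\ell[[G]]$ a ring with continuous Galois action preserving $\mathscr{I}_G$, so the multiplication map and its kernel are Galois-stable, and it suffices to show $\omega(1)=\pm\rho$. The map $\omega$ is injective because cup product is a perfect pairing (Poincar\'e duality). Using the identification $H\simeq H^1_{\text{\'et}}(X_{\bar K},\mathbb{Z}_\ell)^\vee$ of Lemma~\ref{LPoincaredu}, the images of $a_1,b_1,\dots,a_g,b_g$ form a symplectic basis of $H$ whose dual basis of $H^1$ pairs under cup product as the standard symplectic form valued in $H^2\cong\mathbb{Z}_\ell(-1)$; the copairing dual to this form sends a generator of $\mathbb{Z}_\ell(1)$ to $\sum_i(a_i\otimes b_i-b_i\otimes a_i)$, which is exactly $\rho$. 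Hence $\operatorname{im}(\omega)=\mathbb{Z}_\ell\rho=\ker(\text{mult})$, yielding the claimed exact sequence. I expect this last step --- matching the combinatorially defined $\rho$ with the cohomological $\omega(1)$, including the Tate twist and the sign conventions --- to be the main obstacle, since it rests on the classical but convention-sensitive identification of the intersection pairing on $H_1$ of the surface with both the algebraic intersection form and the commutator (cup-product) pairing; the rank-one kernel computation itself is a clean, formal consequence of the one-relator presentation.
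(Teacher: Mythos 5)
Your proof is correct, but it takes a genuinely different route from the paper. The paper's proof is a two-line sketch: it cites the analogous statement for compact Riemann surfaces (Hain, ``Infinitesimal presentations of the Torelli groups,'' Corollary 8.2) and then invokes the standard comparison chain (lift to characteristic zero, spread out, analytify, compare pro-$\ell$ completions and $\ell$-adic vs.\ singular cohomology), omitting all details. You instead prove the group-ring statement from scratch: writing the pro-$\ell$ surface group as $F/N$ with $F$ free and $N$ normally generated by $R=\prod_i[a_i,b_i]$, you show the kernel of $H^{\otimes 2}\cong \mathscr{I}_F^2/\mathscr{I}_F^3\to \mathscr{I}^2/\mathscr{I}^3$ is exactly the $\mathbb{Z}_\ell$-line spanned by the primitive vector $\rho=\sum_i(x_i\otimes y_i-y_i\otimes x_i)$, and only then appeal to the classical identification of the cup-product copairing with the symplectic form to see that $\operatorname{im}(\omega)=\mathbb{Z}_\ell\rho$. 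This buys a self-contained and explicit identification of the kernel (with the pleasant byproduct that $\mathscr{I}^2/\mathscr{I}^3$ is free of rank $(2g)^2-1$), at the cost of still needing the same comparison-theorem input at the end: the existence of a presentation of $\pi_1^\ell(X_{\overline K})$ whose generators map to a symplectic basis for the cup-product form is itself established by comparison with the topological surface group, so your argument does not eliminate that step, it only isolates it. The one place to be careful, which you correctly flag, is the normalization of $\omega$ (sign and choice of generator of $\mathbb{Z}_\ell(1)$); since both $\omega(1)$ and $\rho$ are primitive and you only need equality of the rank-one submodules they span, this is harmless. All the intermediate steps (that the two-sided ideal $J$ generated by $R-1$ is the kernel of $\mathbb{Z}_\ell[[F]]\to\mathbb{Z}_\ell[[G]]$, that $J\subset\mathscr{I}_F^2$, and that $J$ contributes only its leading term $\rho$ modulo $\mathscr{I}_F^3$) check out.
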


\begin{proof}
This is presumably well known; we give a sketch of how to deduce it from existing literature. The analogous theorem for compact Riemann surfaces is immediate from \cite[Corollary 8.2]{Hain02}. Now the result follows by taking pro-$\ell$ completions of the sequence in \cite[Corollary 8.2]{Hain02} and comparing (1) the pro-$\ell$ completion of the group ring of a Riemann surface to $\mathbb{Z}_\ell[[G]]$ and (2) the singular cohomology of a compact Riemann surface to the $\ell$-adic cohomology of $X_{\overline{K}}$. (Strictly speaking, the comparison above goes as follows: if necessary, lift $X$ to characteristic zero. Then spread out, embed the ground ring in $\mathbb{C}$, and analytify. These arguments are lengthy and standard, so we omit them.)
\end{proof}

Recall from Definition~\ref{Dnatsub} that $W \subset \mathscr{I}^2/\mathscr{I}^3$ is  the image of $\textup{Alt}^2H \subset H^{\otimes 2}$ under the multiplication map
$H^{\otimes 2} \rightarrow \mathscr{I}^2/\mathscr{I}^3. $

\begin{lemma}\label{Lcorcoeff} 
  Restricting the multiplication map $H^{\otimes 2}\to \mathscr{I}^2/\mathscr{I}^3$ to $\textup{Alt}^2 H$ induces an isomorphism
  $$(\textup{Alt}^2 H)/\textup{Im}(\omega)\isor W.$$
\end{lemma}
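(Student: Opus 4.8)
The plan is to compute the kernel of the restriction $q|_{\textup{Alt}^2 H}$ of the multiplication map $q \colon H^{\otimes 2} \to \mathscr{I}^2/\mathscr{I}^3$ to $\textup{Alt}^2 H$. By the definition of $W$ in Definition~\ref{Dnatsub}, this restriction is surjective onto $W$, so the entire content of the lemma is the identification of its kernel with $\textup{Im}(\omega)$. The exact sequence of Lemma~\ref{LImodI2} shows that $\ker q = \textup{Im}(\omega)$ on all of $H^{\otimes 2}$ and that $\omega$ is injective; hence $\ker(q|_{\textup{Alt}^2 H}) = \textup{Alt}^2 H \cap \textup{Im}(\omega)$, and the lemma reduces to proving the single containment $\textup{Im}(\omega) \subseteq \textup{Alt}^2 H$.

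First I would verify this containment by recording how $\omega$ interacts with the swap involution $\iota$ on $H^{\otimes 2}$ from Definition~\ref{Dnatsub}. By construction $\omega$ is the $\mathbb{Z}_\ell$-dual of the cup product $\cup \colon H^1(X_{\overline{K}}, \mathbb{Z}_\ell)^{\otimes 2} \to H^2(X_{\overline{K}}, \mathbb{Z}_\ell) \simeq \mathbb{Z}_\ell(-1)$, taken under the identification $H \simeq H^1(X_{\overline{K}}, \mathbb{Z}_\ell)^\vee$ of Lemma~\ref{LPoincaredu}. The key input is the anti-symmetry of the cup product on degree-one classes, namely $\cup \circ \tau = -\cup$, where $\tau$ swaps the two tensor factors of $H^1(X_{\overline{K}}, \mathbb{Z}_\ell)^{\otimes 2}$. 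Since $\iota$ is identified with the transpose $\tau^\vee$ under $H^{\otimes 2} \simeq (H^1(X_{\overline{K}}, \mathbb{Z}_\ell)^{\otimes 2})^\vee$, dualizing the identity $\cup \circ \tau = -\cup$ (and using that dualization reverses composition) gives $\iota \circ \omega = \tau^\vee \circ \cup^\vee = (\cup \circ \tau)^\vee = -\omega$. Thus every element of $\textup{Im}(\omega)$ lies in the $(-1)$-eigenspace of $\iota$, which is precisely $\textup{Alt}^2 H$ by Definition~\ref{Dnatsub}.

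Assembling these steps, $q|_{\textup{Alt}^2 H}$ is surjective onto $W$ with kernel $\textup{Alt}^2 H \cap \textup{Im}(\omega) = \textup{Im}(\omega)$, which yields the asserted isomorphism $(\textup{Alt}^2 H)/\textup{Im}(\omega) \overset{\sim}{\to} W$.

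I expect the only real difficulty to be the sign-and-duality bookkeeping that confirms $\iota \circ \omega = -\omega$ rather than $+\omega$; everything else is formal. It is also worth checking that the argument is insensitive to the prime $\ell$, including $\ell = 2$. This causes no trouble: $\textup{Alt}^2 H$ is defined as the $(-1)$-eigenspace $\{t : \iota t = -t\}$, so membership is tested by a single linear condition and one never needs to distinguish alternating from skew-symmetric tensors. Since $H$ is a free $\mathbb{Z}_\ell$-module by Lemma~\ref{LPoincaredu} and Proposition~\ref{PGabaug}, this eigenspace is even a saturated submodule when $\ell = 2$, and the containment $\textup{Im}(\omega) \subseteq \textup{Alt}^2 H$ holds verbatim.
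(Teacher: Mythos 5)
Your proposal is correct and follows the same route as the paper: the paper's proof likewise reduces the lemma to showing that $\omega$ factors through $\textup{Alt}^2\,H$, and deduces this from the (anti-)symmetry of the cup product on $H^1$. You have simply spelled out the surjectivity/kernel bookkeeping and the dualization $\iota\circ\omega=-\omega$ that the paper leaves implicit.
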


\begin{proof}
It suffices to show that the map $\omega$ of Lemma \ref{LImodI2} factors through $\textup{Alt}^2 H$. But this is immediate from the fact that the cup product on $H^1(X_{\overline{K}}, \mathbb{Z}_\ell)$ is alternating. 
\end{proof}

In \cite[Sections 5 and 10]{HM}, Hain and Matsumoto define classes $m(X,b)$ and $n(X)$ in Galois cohomology, which control the action of the absolute Galois group of $K$ on the quotient of $\pi_1^\ell(X_{\overline{K}}, b)$  by the second piece of the lower central series. In \cite[Theorem 3 and Section~10.5]{HM}, 
they compare these classes to classes $\mu(X,b)$ and $\nu(X)$ arising from the Ceresa cycle under the cycle class map. We briefly compare our classes to theirs when $X$ is smooth and proper.

\begin{prop}\label{Lcompare}
Let $\mu(X,b)$ and $\nu(X)$ be the classes in \cite[Section 4]{HM} constructed from the image of the Ceresa cycle under a cycle class map. Recall from Remark \ref{rem:tilde-classes} the classes $\widetilde{\MD}(X,b)$, $\widetilde{J}(X)$ constructed from $2\MD(X, b)$, $2J(X)$. Then $\widetilde{\MD}(X,b) = \mu(X,b)$ and ${\widetilde J}(X)=\nu(X)$.
\end{prop}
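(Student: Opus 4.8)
The plan is to factor the comparison through the intermediate, purely Galois-theoretic classes $m(X,b)$ and $n(X)$ that Hain and Matsumoto attach to the Galois action on the second nilpotent quotient of $\pi_1^\ell(X_{\overline K},\overline b)$. By \cite[Theorem 3 and 10.5]{HM} one has $m(X,b)=\mu(X,b)$ and $n(X)=\nu(X)$, so it suffices to prove the group-theoretic identities $\widetilde{\MD}(X,b)=m(X,b)$ and $\widetilde{J}(X)=n(X)$. The whole point is that both sides are extracted from the \emph{same} piece of structure: the extension of continuous $\Gal(\overline K/K)$-modules obtained by pulling back \eqref{EMDU} along $a_{\ell,y}$, namely
\[ 0\to \mathscr{I}^2/\mathscr{I}^3\to \mathscr{I}/\mathscr{I}^3\to \mathscr{I}/\mathscr{I}^2\to 0, \]
which is Galois-equivariantly the degree-$\leq 2$ truncation of the group algebra and hence records exactly the Galois action on $G/[G,[G,G]]$ that HM use to define $m$ and $n$.

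First I would pin down the coefficient modules. Lemma \ref{LPoincaredu} identifies $V=\mathscr{I}/\mathscr{I}^2$ with $H=H^1_{\text{\'et}}(X_{\overline K},\mathbb{Z}_\ell)^\vee$ as Galois modules; Lemma \ref{LImodI2} gives the Galois-equivariant sequence $0\to\mathbb{Z}_\ell(1)\xrightarrow{\omega}H^{\otimes 2}\to\mathscr{I}^2/\mathscr{I}^3\to 0$; and Lemma \ref{Lcorcoeff} identifies $W$ with $(\text{Alt}^2 H)/\operatorname{Im}(\omega)$. These are precisely the coefficients in which HM's Johnson/Morita class lives (their class is built from the alternating part $\text{Alt}^2 H$ modulo the Weil-pairing copy of $\mathbb{Z}_\ell(1)$), so the target groups of $\widetilde{\MD}(X,b)$ and $m(X,b)$ are canonically identified, and likewise for the $\Out$-level basepoint-free classes $\widetilde{J}(X)$ and $n(X)$.

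Next I would match the cocycles. The composite $a_{\ell,y}\colon\Gal(\overline K/K)\to\Aut(G)$ followed by $\Aut(G)\to S=\Aut(V)$ is the Galois representation on $H$, and on the subgroup acting trivially on $H$ it factors through $T$; there the explicit cocycle \eqref{Ecocyclerep}, $\sigma\mapsto\bigl(x_i\mapsto\sum_{kl}b_i^{kl}(\sigma)x_kx_l\bigr)$, represents $\MD_{\univ}|_T$, and by \eqref{Eskewsym} its values already lie in $\Hom(V,W)$. Pulled back along $a_{\ell,y}$ this is verbatim the defining cocycle for HM's $m(X,b)$, so the two agree before any coefficient refinement. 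The remaining bookkeeping is the factor of $2$: by Corollary \ref{cor:uniquepreimage}, $\widetilde{\MD}(X,b)$ is the unique class with coefficients in $\Hom(V,W)$ pushing forward to $2\MD(X,b)$, and I would check that HM normalize $\mu$ so that its image in $\Hom(V,\mathscr{I}^2/\mathscr{I}^3)$ is exactly $2\MD(X,b)$; when $\ell\neq 2$ the push-forward map is an isomorphism and the factor is invisible. Finally, $\widetilde{J}(X)=n(X)$ follows by applying the inflation map from $\Out(G)$ to $\Aut(G)$ (Proposition \ref{prop:Juniv}) and using that HM's $n$ is characterized as the basepoint-independent class inflating to $m$, exactly as $\widetilde{J}$ is characterized relative to $\widetilde{\MD}$.

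The main obstacle I anticipate is the careful reconciliation of conventions across the two papers: translating HM's lower-central-series formalism into the augmentation-ideal filtration (a Magnus-type dictionary, largely supplied by Lemmas \ref{LImodI2} and \ref{Lcorcoeff}), and above all the factor-of-$2$ normalization at $\ell=2$. This is exactly the place where the symmetric part of $V\otimes V$ fails to split off integrally, which is why the refined class $\widetilde{\MD}$ rather than $\MD$ is the one that matches $\mu$; verifying that the constants line up on the nose --- rather than up to a $2$-adic unit or a sign --- is the delicate step, and is where I would spend the most care comparing the two cocycle formulas term by term.
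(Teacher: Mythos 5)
Your overall architecture matches the paper's: reduce to Hain--Matsumoto's group-theoretic classes $m(X,b)$, $n(X)$ via \cite[Theorem 3 and 10.5]{HM}, identify the coefficient modules through Lemmas \ref{LPoincaredu}, \ref{LImodI2}, \ref{Lcorcoeff}, compare cocycles after restriction to the subgroup $T$ acting trivially on $H$, and then use the uniqueness of the lift of the doubled class (the diagram chase of Proposition \ref{Pcohvan} against \cite[Proposition 5.5]{HM}) to conclude. That is exactly the route the paper takes.

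The genuine gap is in the sentence claiming that the cocycle representing $\MD_{\univ}|_T$ is ``verbatim the defining cocycle for HM's $m(X,b)$.'' It is not verbatim: Hain--Matsumoto's class is built from the Magnus homomorphism $\tilde\epsilon\colon g\mapsto\bigl(v\mapsto g(s(v))\,s(v)^{-1}\bmod L^3G\bigr)$, a \emph{multiplicative} expression valued in $L^2G/L^3G$, whereas the $\MD_{\univ}$ cocycle is the \emph{additive} difference $g\mapsto\bigl(v-1\mapsto g(s(v))-s(v)\bigr)$ in $\mathscr{I}^2/\mathscr{I}^3$. Reconciling these is the computational heart of the paper's proof: one must show
\[ g(s(v))\,s(v)^{-1}-1\equiv g(s(v))-s(v)\pmod{\mathscr{I}^3}, \]
which follows from expanding $s(v)^{-1}=1-(s(v)-1)+(s(v)-1)^2\bmod\mathscr{I}^3$ and using $g(s(v))-s(v)\in\mathscr{I}^2$ for $g\in T$. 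The coefficient lemmas you cite identify the target modules but do not supply this identity, and the cocycle \eqref{Ecocyclerep} you invoke is the free-pro-$\ell$ computation from Lemma \ref{Lassemble}, not a formula for HM's class. You do flag ``translating HM's lower-central-series formalism into the augmentation-ideal filtration'' as the delicate point, which is the right instinct, but the proof as written asserts the match rather than establishing it. Relatedly, the factor of $2$ is not a normalization you need to ``check'' on HM's side so much as the observation that both constructions hit the same $2$-torsion obstruction in $H^1(S,\cdot)$ and both resolve it by doubling, after which restriction to $T$ plus uniqueness of the lift forces the classes to agree; once the displayed congruence is proved, this step goes through as you describe.
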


\begin{proof}
We give a sketch for $\widetilde{\MD}(X, b)$; the case of $\widetilde{J}(X)$ follows analogously. Let
\[ G = L^1G \supset L^2G \supset \cdots,\quad \textup{where } L^{k+1}G = \overline{[G,L^kG]},
\]
be the lower central series filtration of $G$. By \cite[Corollary 4.2]{Quillen}, we have the following commutative diagram of exact sequences, where all maps are compatible with the induced $\Aut(G)$-actions:  
\begin{center}
\begin{tikzcd}
0 \arrow[r]& L^2G/L^3G \arrow[r]\arrow[d,hook] & G/L^3G \arrow[r]\arrow[d,hook] & H \arrow[r]\arrow[d,"\simeq"]& 0\\
0 \arrow[r] & \mathscr{I}^2/\mathscr{I}^3 \arrow[r] & \mathscr{I}/\mathscr{I}^3 \arrow[r]& \mathscr{I}/\mathscr{I}^2 \arrow[r]& 0.
\end{tikzcd}
\end{center}
Here all the vertical maps are induced by sending a group element $g$ to $g-1$. Note that the middle vertical inclusion is only a set-theoretic map, not a homomorphism.

Let
$$s\colon H\lra G/L^3G,\quad v \longmapsto s(v)$$
be a set-theoretic section to the quotient map $G/L^3G\to H$, and let 
$$s'\colon \mathscr{I}/\mathscr{I}^2\lra \mathscr{I}^2/\mathscr{I}^3,\quad v-1 \longmapsto s(v)-1$$ 
be the induced map. Let $T\subset \Aut(G)$ be the subgroup acting trivially on $H$. From the top sequence, following \cite[Section 5.1]{HM}, we get the Magnus homomorphism $\tilde\epsilon\mkern-1mu\in\mkern-1mu \Hom(T, \Hom(H,L^2G/L^3G))^{\GSp(H)}$:
\[\tilde\epsilon: g \longmapsto \left(v \longmapsto g(s(v))s(v)^{-1} \bmod L^3G\right). \]

By \cite[Proposition 5.5]{HM}, there is a unique class $m \in H^1(\Aut G,\Hom(H,L^2G/L^3G) )$ whose image under 
\[ H^1\left(\Aut G,\Hom\left(H,L^2G/L^3G\right)\right) \lra H^0\left(\GSp H, H^1\left(T, \Hom\left(H,L^2G/L^3G\right)\right)\right) \]
is $2\tilde\epsilon$ under the canonical identification 
$$\Hom\left(T, \Hom\left(H, L^2G/L^3G\right)\right)^{\GSp H}\simeq H^0\left(\GSp H, H^1\left(T, \Hom\left(H,L^2G/L^3G\right)\right)\right).$$
By \cite[Theorem 3]{HM}, the pullback of $m$ by $\operatorname{Gal}(\bar{K}/K) \to \Aut G$ induced by $b$ agrees with the Ceresa class $\mu(X,b)$.

Now let us rewrite $g(s(v))s(v)^{-1}-1$ modulo $\mathscr{I}^3$:
\begin{align*}
   g(s(v))s(v)^{-1} -1  & = g(s(v))(s(v)^{-1}-1)+g(s(v)) -1\\
    & \equiv g(s(v))( 1-s(v) +(1-s(v))^2) +g(s(v)) -1\\ 
    & = g(s(v))( 1-s(v)) +(g(s(v))-1)(1-s(v))^2+(1-s(v))^2 +g(s(v)) -1\\
    & \equiv g(s(v))( 1-s(v))+(1-s(v))^2 +g(s(v)) -1\\
    &=g(s(v)) -s(v) + (g(s(v))-s(v))(1-s(v))\\
    &\equiv g(s(v))-s(v).
\end{align*}
Here we use the substitution $$s(v)^{-1}-1=1-s(v) +(1-s(v))^2 \bmod \mathscr{I}^3$$ and the fact that $$(g(s(v))-1)(1-s(v))^2, (g(s(v))-s(v))(1-s(v))\in\mathscr{I}^3$$ (because $g(s(v))-s(v)\in \mathscr{I}^2$ by the definition of $T$).

But the cocycle representing the $\MD_{\univ}|_T$ is
\[ g \longmapsto ( v-1 \longmapsto g(s(v)-1)-(s(v)-1)   =g(s(v))-s(v)), \]
which proves that the two classes are the same under restriction to $T$. Now comparing the diagram chases in the proof of Proposition \ref{Pcohvan} and \cite[Proposition 5.5]{HM} (using the identification from Lemma \ref{Lcorcoeff}) completes the proof.
\end{proof}

\subsubsection{Stability under base change}
We finally observe that the property of the Johnson or modified diagonal class being torsion is in fact a geometric property---that is, it descends through finite extensions of the ground field.

\begin{prop}\label{prop:torsion-geometric}
Let $K$ be a field and $X$ a smooth, geometrically connected curve over $K$. Let $\ell$ be a prime different from the characteristic of $K$ and $J(X)$ the associated Johnson class; if\, $b\in X(K)$ is a rational point, we let $\MD(X, b)$ be the modified diagonal class. Let $L/K$ be a finite extension. Then $J(X_L)$ $($resp.~$\MD(X_L, b_L))$ is torsion if and only if\, $J(X)$ $($resp.~$\MD(X, b))$ is torsion.
\end{prop}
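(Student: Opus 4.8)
The plan is to reduce the statement to a formal property of restriction maps in continuous group cohomology. First I would observe that, since $L/K$ is algebraic, we have $\overline{L} = \overline{K}$, so $(X_L)_{\overline{K}} = X_{\overline{K}}$ and the geometric pro-$\ell$ fundamental group $G = \pi_1^\ell(X_{\overline{K}}, \bar x)$ is literally unchanged under base change from $K$ to $L$. Consequently the universal classes $J_{\univ} \in H^1(\Out(G), A(G))$ and $\MD_{\univ}$ are the same for $X$ and for $X_L$; only the Galois representation changes.

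Next I would pin down the precise relationship between the two Galois representations. The inclusion $\Gal(\overline{K}/L) \hookrightarrow \Gal(\overline{K}/K)$ is open of finite index equal to the separable degree $[L:K]_s$. Under the homotopy exact sequence for the \'etale fundamental group, $\pi_1^{\text{\'et}}(X_L, \bar x)$ is the preimage of $\Gal(\overline{K}/L)$ in $\pi_1^{\text{\'et}}(X, \bar x)$, with the same geometric part; hence the outer action $o_\ell$ attached to $X_L$ (and, for a rational point $b$, the action $a_{\ell,b_L}$) is exactly the restriction of the corresponding map for $X$ to the subgroup $\Gal(\overline{K}/L)$. By functoriality of pullback it then follows that $J(X_L) = \mathrm{res}^{\Gal(\overline{K}/K)}_{\Gal(\overline{K}/L)} J(X)$ in $H^1(\Gal(\overline{K}/L), A(G))$, and likewise $\MD(X_L, b_L) = \mathrm{res}\,\MD(X, b)$.

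It then remains to prove the purely cohomological claim: if $\Gamma$ is a profinite group, $H \leq \Gamma$ an open subgroup of finite index $n$, $M$ a continuous $\Gamma$-module that is a $\mathbb{Z}_\ell$-module, and $c \in H^1(\Gamma, M)$, then $c$ is torsion if and only if $\mathrm{res}^\Gamma_H(c)$ is torsion. One direction is immediate, since restriction is $\mathbb{Z}_\ell$-linear. For the converse I would invoke the corestriction (transfer) map $\mathrm{cor} \colon H^1(H, M) \rightarrow H^1(\Gamma, M)$, which exists because $H$ is open of finite index and satisfies $\mathrm{cor} \circ \mathrm{res} = n \cdot \mathrm{id}$; this is the standard restriction--corestriction formalism in continuous cohomology. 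If $m \cdot \mathrm{res}(c) = 0$ for some nonzero integer $m$, then applying $\mathrm{cor}$ gives $mn \cdot c = 0$, so $c$ is torsion. Applying this with $\Gamma = \Gal(\overline{K}/K)$, $H = \Gal(\overline{K}/L)$, $n = [L:K]_s$, and $M = A(G)$ (respectively $\Hom(\mathscr{I}/\mathscr{I}^2, \mathscr{I}^2/\mathscr{I}^3)$) yields the proposition.

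The only point requiring genuine care, rather than formal bookkeeping, is the compatibility of the Galois actions asserted in the second paragraph --- namely that base change along $L/K$ corresponds exactly to restriction to the open subgroup $\Gal(\overline{K}/L)$ at the level of the maps $o_\ell$ and $a_{\ell,b}$. This follows from the functoriality of the \'etale fundamental group together with the identification $(X_L)_{\overline{K}} = X_{\overline{K}}$, but it should be stated explicitly, as it is what licenses viewing $J(X_L)$ and $\MD(X_L, b_L)$ as honest restrictions of $J(X)$ and $\MD(X, b)$. Everything else is the standard transfer argument.
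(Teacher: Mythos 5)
Your proposal is correct and follows essentially the same route as the paper: identify $J(X_L)$ (resp.\ $\MD(X_L,b_L)$) with the restriction of $J(X)$ (resp.\ $\MD(X,b)$) along $\Gal(\overline{K}/L)\hookrightarrow\Gal(\overline{K}/K)$, then use that restriction preserves torsion for one direction and the restriction--corestriction identity $\mathrm{cor}\circ\mathrm{res}=[\Gal(\overline{K}/K):\Gal(\overline{K}/L)]\cdot\mathrm{id}$ for the other. The paper treats the compatibility of base change with restriction as following ``from the definition,'' whereas you rightly flag it as the one point deserving explicit justification; otherwise the two arguments coincide.
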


\begin{proof}
Choose an algebraic closure $\overline{K}$ of $K$ (and hence of $L$). Let $i_{L/K}\colon \textup{Gal}(\overline{K}/L)\to \textup{Gal}(\overline{K}/K)$ be the natural map; then it follows from the definition that $i_{L/K}^*J(X)=J(X_L)$ (resp.~$i_{L/K}^*\MD(X, b)=\MD(X_L, b_L)$). This proves the ``if'' direction.

To see the ``only if'' direction, suppose $J(X_L)$ (resp.~$\MD(X_L, b_L)$) is torsion. Then $i_{L/K *}i_{L/K}^*J(X)$ (resp.~$i_{L/K *}i_{L/K}^*\MD(X,b)$) is torsion (where here $i_{L/K*}$ denotes the corestriction map). But $i_{L/K *}i_{L/K}^*$ is simply multiplication by the index of $\textup{Gal}(\overline{K}/L)$ in $\textup{Gal}(\overline{K}/K)$, which completes the proof.
\end{proof}

\section{Curves with torsion modified diagonal or Johnson class}\label{Storsion}

\subsection{$\boldsymbol{\Aut(X)}$-invariance}
Let $X$ be a smooth geometrically connected curve over a field $K$ and $\ell$ a prime different from the characteristic of $K$. Choose a geometric point $\overline{x}$ of $X$, and let $G=\pi_1^\ell(X_{\overline{K}}, \overline{x})$.

In this subsection, we show that $\Aut_K(X)$ places restrictions on the Johnson class $J(X)$; analogously, $\Aut_K(X, b)$ places restrictions on $\MD(X, b)$ for $b\in X(K)$.

\begin{prop}\label{AutProp}
Let $B\subset \Aut_K(X)$ be  a finite subgroup such that $H^0(B, A(G))=0$. Then the Johnson class $J(X)$ is torsion with order $d \mid \# B$. 
Likewise, for $b\in X(K)$, if $B'\subset \Aut_K(X, b)$ is a finite subgroup with $H^0(B', \Hom(\mathscr{I}/\mathscr{I}^2,\mathscr{I}^2/\mathscr{I}^3))=0$, then the class $\MD(X,b)$ is torsion with order $d \mid \# B'$.
\end{prop}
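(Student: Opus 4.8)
The plan is to use the fact that $K$-rational automorphisms of $X$ induce automorphisms of the geometric fundamental group $G$ that commute with the Galois action, and then to run a norm (averaging) argument over the finite group $B$, where the hypothesis $H^0(B,A(G)) = 0$ forces the relevant norm map on coefficients to vanish.

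First I would produce the group-theoretic input. Any $\beta \in \Aut_K(X)$ base-changes to $\beta_{\overline K} = \beta \times \mathrm{id}$ on $X_{\overline K}$, and (after a choice of path, i.e.\ up to inner automorphism) induces an element of $\Out(G)$; this defines a homomorphism $\Aut_K(X) \to \Out(G)$ whose image I again denote by $B$. The key point is that $\beta \times \mathrm{id}$ and the Galois action $\mathrm{id}\times\sigma$ commute as morphisms of the scheme $X\times_K \overline K$, so their images in $\Out(G)$ commute; hence $B$ commutes with $\Gamma \colonequals o_\ell(\Gal(\overline K/K)) \subseteq \Out(G)$. In particular $B$ normalizes $\Gamma$, so $\Gamma \triangleleft \Gamma B$ with finite quotient $Q \colonequals \Gamma B/\Gamma$, which is a quotient of $B$. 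For the basepointed statement one uses instead the honest-automorphism representation $\Aut_K(X,b) \to \Aut(G)$ (honest because $\overline b$ is fixed by both $\beta$ and, since $b\in X(K)$, by $\Gal(\overline K/K)$); it commutes with $a_{\ell,b}(\Gal(\overline K/K))$, and the argument below carries over verbatim after replacing the triple $(\Out(G), A(G), J_{\univ})$ by $(\Aut(G), \Hom(\mathscr I/\mathscr I^2, \mathscr I^2/\mathscr I^3), \MD_{\univ})$.

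I would then record two consequences of this setup. Since $J_{\univ}|_\Gamma$ is the restriction of the global class $J_{\univ} \in H^1(\Out(G), A(G))$ to the normal subgroup $\Gamma \triangleleft \Gamma B$, it is invariant under the induced $Q$-action on $H^1(\Gamma, A(G))$; and because $B$ centralizes $\Gamma$, the conjugation part of this $Q$-action is trivial, so the action of $b\in B$ on $H^1(\Gamma, A(G))$ is exactly the map $H^1(\Gamma, b)$ induced by the coefficient endomorphism $b\colon A(G)\to A(G)$. Now consider the norm map $N_B \colonequals \sum_{b\in B} b \colon A(G)\to A(G)$. As $\Gamma$ and $B$ commute, $N_B$ is a morphism of $\Gamma$-modules; its image lies in $A(G)^B = H^0(B, A(G)) = 0$, so $N_B = 0$. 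Applying the additive functor $H^1(\Gamma, -)$ and evaluating at $J_{\univ}|_\Gamma$ gives
\[
0 = H^1(\Gamma, N_B)(J_{\univ}|_\Gamma) = \sum_{b\in B} H^1(\Gamma, b)(J_{\univ}|_\Gamma) = \sum_{b\in B} J_{\univ}|_\Gamma = \#B \cdot (J_{\univ}|_\Gamma),
\]
where the third equality uses the $Q$-invariance just noted. Pulling back along $o_\ell$ (which factors through $\Gamma$) yields $\#B\cdot J(X) = 0$, so $J(X)$ is torsion of order $d\mid \#B$. The case of $\MD(X,b)$ is identical.

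The step I expect to require the most care is the first: rigorously promoting the scheme-level commutativity $(\beta\times\mathrm{id})\circ(\mathrm{id}\times\sigma) = (\mathrm{id}\times\sigma)\circ(\beta\times\mathrm{id})$ to the statement that $B$ and $\Gamma$ commute inside $\Out(G)$ (keeping track of the basepoint/path ambiguities that are precisely what $\Out$ absorbs), together with the identification of the $Q$-action on $H^1(\Gamma, A(G))$ with the bare coefficient action. Once these compatibilities are established, the vanishing $N_B = 0$ and the resulting torsion bound are purely formal.
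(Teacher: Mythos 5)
Your proof is correct and rests on exactly the same two inputs as the paper's: the commutativity of the $B$-action with the Galois action inside $\Out(G)$ (resp.\ $\Aut(G)$), and the vanishing of $H^0(B, A(G))$ (resp.\ $H^0(B',\Hom(\mathscr{I}/\mathscr{I}^2,\mathscr{I}^2/\mathscr{I}^3))$). The paper packages the averaging step via inflation--restriction for the product $\Gal(\overline{K}/K)\times B$ mapping to $\Out(G)$, so that multiplication by $\#B$ kills the restriction to $B$ and the remaining class is inflated from $H^1(\Gal(\overline{K}/K), A(G)^B)=0$, whereas you apply the norm map $N_B=0$ directly on coefficients over $\Gamma$; these are interchangeable formal arguments yielding the identical bound $d\mid \#B$.
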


\begin{proof}
We  first prove the statement for $J(X)$.

We apply the inflation-restriction sequence to the group extension
$$1\lra B \lra \Gal\left(\bar{K}/K\right) \times B \lra \Gal\left(\bar{K}/K\right) \lra 1,$$
which gives
 \begin{align*}\label{AutCsequence}
     0\lra H^1\left(\Gal\left(\bar{K}/K\right), A(G)^{B}\right)&\lra H^1\left(\Gal\left(\bar{K}/K\right) \times B, A(G) \right)
     \lra H^1\left(B, A(G)\right)^{\Gal\left(\bar{K}/K\right)}.
 \end{align*}

Since $B$ is a finite group, its cohomology $H^n(B,M)$ has exponent dividing $\# B$ for any finitely generated $\mathbb{Z}_\ell[B]$-module $M$ and any $n>0$.
The pullback of the Johnson class $J_{\univ}$ via
$$B \times \Gal\left(\bar{K}/K\right) \lra \Out\left(\pi_1^\ell\left(X_{\bar{K}}\right)\right)$$
lives in $ H^1\!(\Gal(\bar{K}/K)) \times B, A(G) )$. So multiplying this class by $\!\# B$ gives a class in $H^1\!(\Gal(\bar{K}/K)), A(G)^{B})$. But by assumption, $A(G)^{B}=0$, and we conclude.

The proof is the same for the class $\MD(X,b)$ with
the coefficients
$A(G)$ replaced by $\Hom(\mathscr{I}/\mathscr{I}^2,\mathscr{I}^2/\mathscr{I}^3)$ and $B$ replaced by $B'$.
\end{proof}

\subsection{Hyperelliptic curves}

\begin{prop}\label{hyperelliptic}
When $X$ is a hyperelliptic curve, the class $J(X)$ is $2$-torsion. Moreover, if\, $X$ has a rational Weierstrass point $x$, the class $\MD(X,x)$ is also $2$-torsion.
\end{prop}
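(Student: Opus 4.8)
The plan is to exploit the hyperelliptic involution $\iota$, which is canonical (hence defined over $K$), satisfies $\iota^2=1$, and acts on $H=\mathscr{I}/\mathscr{I}^2\cong H^1_{\text{\'et}}(X_{\bar K},\mathbb{Z}_\ell)^\vee$ as multiplication by $-1$ --- the hyperelliptic involution acts by $-1$ on holomorphic differentials, hence by $-1$ on $H^1$ and on its dual $H$. From this one reads off the induced action of $\iota$ on all the derived coefficient modules: it acts trivially on $H^{\otimes 2}$ and on its quotient $\mathscr{I}^2/\mathscr{I}^3$, hence by $-1$ on $\Hom(\mathscr{I}/\mathscr{I}^2,\mathscr{I}^2/\mathscr{I}^3)$ and on the further quotient $A(G)$. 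Pinning down this $(-1)$-action is the first step and is the conceptual heart of the argument.

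First I would treat $\MD(X,x)$. A rational Weierstrass point $x$ is by definition fixed by $\iota$, so $\iota\in\Aut_K(X,x)$, and I would apply Proposition~\ref{AutProp} with $B'=\langle\iota\rangle\cong\mathbb{Z}/2\mathbb{Z}$. The hypothesis $H^0(B',\Hom(\mathscr{I}/\mathscr{I}^2,\mathscr{I}^2/\mathscr{I}^3))=0$ holds for free: since $\iota$ acts by $-1$, its invariants coincide with the $2$-torsion of this module, which vanishes because $\mathscr{I}/\mathscr{I}^2$ is a free $\mathbb{Z}_\ell$-module (Proposition~\ref{PGabaug}) and $\mathscr{I}^2/\mathscr{I}^3$ is torsion-free (Lemma~\ref{LImodI2}), so $\Hom(\mathscr{I}/\mathscr{I}^2,\mathscr{I}^2/\mathscr{I}^3)$ is itself torsion-free. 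Proposition~\ref{AutProp} then gives that $\MD(X,x)$ has order dividing $\#B'=2$.

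For $J(X)$ the naive strategy via Proposition~\ref{AutProp} with $B=\langle\iota\rangle$ would require $H^0(B,A(G))=A(G)[2]=0$; but $A(G)$ is defined as a \emph{cokernel} and could a priori carry $2$-torsion, so this is the one step I do not expect to be automatic. I would sidestep it with a functoriality argument insensitive to torsion in $A(G)$. Because $\iota$ is $K$-rational, its image $c\in\Out(G)$ centralizes $o_\ell(\Gal(\bar K/K))$; and because inner automorphisms act trivially on group cohomology, the canonical class $J_{\univ}$ is fixed by the automorphism of the pair $(\Out(G),A(G))$ given by conjugation by $c$ together with the module action of $c$. Since $c$ commutes with $o_\ell(\Gal(\bar K/K))$, the module action of $c$ is Galois-equivariant and descends to an endomorphism of $H^1(\Gal(\bar K/K),A(G))$; pulling the above identity back along $o_\ell$ and using $c\,o_\ell(g)\,c^{-1}=o_\ell(g)$ turns it into $c\cdot J(X)=J(X)$. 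As $c$ acts as $-1$ on $A(G)$, this reads $-J(X)=J(X)$, i.e. $2J(X)=0$. (The very same argument reproves the $\MD$ statement verbatim, working in $\Aut(G)$ via $a_{\ell,x}$ and using $\iota(x)=x$.)

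The main obstacle is thus entirely the possible $2$-torsion in the cokernel $A(G)$, which is what forces the torsion-insensitive functoriality argument for $J(X)$ in place of a direct appeal to Proposition~\ref{AutProp}; the remaining ingredients (the $(-1)$-action of $\iota$, its $K$-rationality, and the fact that it fixes Weierstrass points) are standard. I would also be careful with basepoints in the $\MD$ case: the argument genuinely needs $\iota(x)=x$, which is exactly the hypothesis that $x$ be a Weierstrass point and is consistent with $\MD$ being basepoint-dependent.
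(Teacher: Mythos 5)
Your proof is correct. The $\MD(X,x)$ half is exactly the paper's argument: $\iota$ fixes the Weierstrass point, acts as $-1$ on $\mathscr{I}/\mathscr{I}^2$ and trivially on $\mathscr{I}^2/\mathscr{I}^3$, hence as $-1$ on the torsion-free module $\Hom(\mathscr{I}/\mathscr{I}^2,\mathscr{I}^2/\mathscr{I}^3)$, so the $H^0$-hypothesis of Proposition~\ref{AutProp} holds with $B'=\langle\iota\rangle$. For $J(X)$ you diverge from the paper, which simply applies Proposition~\ref{AutProp} with $B=\langle\iota\rangle$ as well, passing tacitly from $\Hom(\mathscr{I}/\mathscr{I}^2,\mathscr{I}^2/\mathscr{I}^3)^\iota=0$ to $A(G)^\iota=0$; since $\iota$ acts as $-1$ on the cokernel $A(G)$, that last vanishing is precisely the statement $A(G)[2]=0$, which is automatic for $\ell$ odd but does require justification (saturatedness of the image of $m$) when $\ell=2$ --- the very point you flag. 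Your substitute argument --- the class of $\iota$ in $\Out(G)$ centralizes the image of Galois, conjugation acts trivially on $H^1(\Out(G),A(G))$, hence $J(X)=c\cdot J(X)=-J(X)$ --- is a genuinely different and somewhat more robust route: it is insensitive to torsion in $A(G)$ and delivers $2J(X)=0$ directly, at the cost of invoking the (standard) triviality of the conjugation action on group cohomology rather than a single coefficient computation. Both arguments are valid and yield the asserted $2$-torsion; yours closes a small gap in the $\ell=2$ case of the coefficient-module approach.
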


\begin{proof}
Let $\iota \in \Aut_K(X)$ denote the hyperelliptic involution on $X$. Then $\iota$ acts on $H_1(X,\mathbb{Z}) \cong \mathscr{I}/\mathscr{I}^2$ as multiplication by $-1$, and hence on $\mathscr{I}^2/\mathscr{I}^3$ as the identity. Thus $ \Hom(\mathscr{I}/\mathscr{I}^2,\mathscr{I}^2/\mathscr{I}^3)^\iota=0$. Now the statements follow from Proposition~\ref{AutProp}, applied with $B=B'=\langle \iota \rangle$.
\end{proof}

\begin{remark}
The method used in Proposition~\ref{hyperelliptic} cannot yield similar results for superelliptic curves, using the cyclic group $\Aut (C/\mathbb{P}^1)$, as we now explain. For a degree $n$ cyclic cover of the projective line, pick a prime $p \mid n$ so that  we have $\mu_p \subset \Aut (C/\mathbb{P}^1)$ (here $\mu_p$ is the set of $\supth{p}$ roots of unity). Given a primitive root of unity $\zeta_p \in \mu_p$, its action on $H=H^1_{\mathrm{sing}}(C,\mathbb{C})$ gives a decomposition $H = \oplus^{p-1}_{i=1} V_i$ where $\zeta_p$ acts on $V_i$ as multiplication by $\zeta_p^i$. Then we have $\dim V_i= {2g}/({p-1})$, which in particular does not depend on $i$; see \cite{Moonen}. Similarly, $H \otimes H$ also decomposes into eigenspaces for the $\zeta_p$-action, and all the $V_i$ for $i =1 ,\ldots, p-1$ appear with non-zero multiplicity in this decomposition. Therefore, we cannot rule out non-trivial $\Aut (C/\mathbb{P}^1)$-equivariant maps between $H$ and $H \otimes H$ using this isotypic decomposition alone.
\end{remark}

\subsection{The  Fricke--Macbeath curve}\label{sec:Fricke-Macbeath}

The Fricke--Macbeath curve $C$ is the unique Hurwitz curve over $\overline{\mathbb{Q}}$ of genus $7$. Its automorphism group is the simple group $\PSL_2(8)$ of order $504$; see \cite[p.~541]{MR0177342}. The simplicity of $\textup{PSL}_2(8)$ implies that there is no central order $2$ element in $\Aut_{\overline{\mathbb{Q}}}(C)$ and, in particular, $C$ is not hyperelliptic. By analyzing the action of the automorphism group on the homology of curve, we show the following.

\begin{prop}\label{FrickeMacbeath}
Let $X/K$ be a curve over a number field with $X_{\overline{\mathbb{Q}}}$ isomorphic to the Fricke--Macbeath curve $C$ above. The class $J(X)$ is torsion. 
\end{prop}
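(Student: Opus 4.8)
The plan is to apply Proposition~\ref{AutProp} with $B$ equal to the full geometric automorphism group $\Aut_{\overline{\Q}}(C) \cong \PSL_2(8)$. Since these automorphisms need not be defined over $K$, I first invoke Proposition~\ref{prop:torsion-geometric}: torsionness of $J(X)$ is insensitive to finite base change, so after replacing $K$ by a finite extension $L$ over which every element of $\Aut_{\overline{\Q}}(C)$ is defined, I may assume $B := \Aut_L(X_L) \cong \PSL_2(8)$. It then suffices to control $H^0(B, A(G))$. Because I only want torsionness (and not the sharp order bound of Proposition~\ref{AutProp}), it is enough to show that $A(G)\otimes\Q_\ell$ has no nonzero $B$-invariants: rerunning the inflation--restriction argument of Proposition~\ref{AutProp} with $\Q_\ell$-coefficients (where $H^{>0}(B,-)$ vanishes as $B$ is finite) forces the image of $J(X)$ in $H^1(\Gal(\overline{K}/K), A(G)\otimes\Q_\ell)$ to vanish, and hence $J(X)$ is torsion. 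This rational reformulation sidesteps any possible $\ell$-torsion in $A(G)$ itself.

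Next I reduce $(A(G)\otimes\Q_\ell)^B = 0$ to a representation-theoretic statement. Write $V := (\mathscr{I}/\mathscr{I}^2)\otimes\Q_\ell$, which by Lemma~\ref{LPoincaredu} is $H^1_{\textup{\'et}}(C_{\overline{K}},\Q_\ell)^\vee$. Since $C/B \cong \mathbb{P}^1$ has genus $0$, we have $V^B \cong (H^1(C_{\overline K}))^{\vee B} \cong H^1(\mathbb{P}^1)^\vee = 0$. Taking $B$-invariants of the defining right-exact sequence $V \xrightarrow{m} \Hom(\mathscr{I}/\mathscr{I}^2, \mathscr{I}^2/\mathscr{I}^3)\otimes\Q_\ell \to A(G)\otimes\Q_\ell \to 0$ of Definition~\ref{DAG} — an exact operation in characteristic zero — and using $V^B = 0$ collapses it to $(A(G)\otimes\Q_\ell)^B \cong \Hom_B(V, (\mathscr{I}^2/\mathscr{I}^3)\otimes\Q_\ell)$. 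By Lemma~\ref{LImodI2}, and since $B$ acts trivially on $\Q_\ell(1)$ (any automorphism acts on $H^2$ of a curve by its degree, which is $1$), one has $(\mathscr{I}^2/\mathscr{I}^3)\otimes\Q_\ell \cong (V\otimes V)/\mathbf{1}$ as $B$-modules, where $\mathbf 1$ is the trivial subrepresentation $\omega(\Q_\ell(1))$. As $V$ has no trivial constituent and the category is semisimple, $\Hom_B(V, (V\otimes V)/\mathbf{1}) \cong \Hom_B(V, V\otimes V)$. Thus everything reduces to showing $\Hom_B(V, V\otimes V) = 0$, equivalently — using the $B$-invariant symplectic self-duality $V \cong V^\vee$ (up to a $B$-trivial Tate twist) from the cup product pairing — that $(V^{\otimes 3})^B = 0$, i.e.\ that there is no nonzero $\PSL_2(8)$-invariant trilinear form on $H_1(C)$.

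The final, and main, step is the character computation on $\PSL_2(8)$. Over $\C$ the Hodge decomposition gives $V\otimes\C \cong H^0(C,\Omega^1)\oplus\overline{H^0(C,\Omega^1)}$; each $7$-dimensional (cuspidal) irreducible of $\PSL_2(8)$ has real-valued character, hence is self-conjugate, so this is $\theta^{\oplus 2}$ for a single irreducible $\theta = H^0(C,\Omega^1)$, and $(V^{\otimes 3})^B = 0$ is equivalent to the multiplicity of $\theta$ in $\theta\otimes\theta$ being zero. Using the standard character table of $\PSL_2(8)$ (nine classes $1, 2A, 3A, 7A, 7B, 7C, 9A, 9B, 9C$ of sizes $1, 63, 56, 72, 72, 72, 56, 56, 56$, and four cuspidal characters of degree $7$), a direct evaluation of $\tfrac{1}{504}\sum_{b}\chi_\theta(b)^3$ yields $0$ precisely for the distinguished cuspidal representation whose character of the nonsplit torus $C_9$ factors through $C_3\cong\mu_3$. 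I expect the identification of $H^0(C,\Omega^1)$ with \emph{this} irreducible to be the crux: for the other three cuspidal representations the same sum equals $1$, so the argument genuinely depends on knowing which representation the holomorphic differentials carry. This can be pinned down either by the Chevalley--Weil formula applied to the $(2,3,7)$ branch data of $C \to C/\PSL_2(8) \cong \mathbb{P}^1$, or by computing Lefschetz fixed-point numbers of automorphisms of each order directly (for instance an involution fixes $4$ points, giving $\chi_V(2A) = -2$); either determines $\chi_V$ on every conjugacy class and completes the computation.
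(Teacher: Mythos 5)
Your proposal is correct and follows the same overall skeleton as the paper's proof: reduce via Proposition~\ref{prop:torsion-geometric} to the case $\Aut_K(X)\cong \PSL_2(8)$, feed vanishing of $\PSL_2(8)$-invariants into Proposition~\ref{AutProp}, identify $H^1$ as two copies of a single $7$-dimensional irreducible $\theta$, and kill $\Hom_{\PSL_2(8)}(\theta,\theta\otimes\theta)$ by the character sum $\tfrac{1}{504}\sum_b\chi_\theta(b)^3=0$. Two points of comparison. First, your passage to $\Q_\ell$-coefficients and the explicit right-exactness argument giving $(A(G)\otimes\Q_\ell)^B\cong\Hom_B(V,V\otimes V)$ (using $V^B\cong H^1(\mathbb{P}^1)^\vee=0$) is in fact more careful than the paper, which passes directly from $H^0(B,\Hom(\mathscr{I}/\mathscr{I}^2,\mathscr{I}^2/\mathscr{I}^3))=0$ to $H^0(B,A(G))=0$; invariants of a quotient module need not vanish integrally, though this only affects the order bound, not torsionness. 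Second, and more substantively, the step you flag as the crux --- deciding which of the four $7$-dimensional cuspidal representations carries $H^0(C,\Omega^1)$ --- is handled in the paper by a slicker observation than Chevalley--Weil or Lefschetz counts: since $\PSL_2(8)$ acts on $H^1(C,\Q)$, the character of $\theta\oplus\overline{\theta}=\theta^{\oplus 2}$ must be rational, and $\chi_2$ is the unique $7$-dimensional irreducible with rational character (the other three take irrational values of the form $\pm(\zeta_9^a+\zeta_9^{-a})$ on the order-$9$ classes). This requires no geometric input beyond faithfulness of the action on $H^1$, which forces $\theta$ to be irreducible --- a point you should make explicit, though it also follows from your observation that $V^B=0$ together with the fact that the smallest nontrivial irreducible has dimension $7$. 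If you do pursue the Lefschetz route, note that the involution count $\chi_V(2A)=-2$ alone does not distinguish the four candidates (all have $\chi(2A)=-1$); you would need the order-$3$ fixed-point count ($6$ fixed points, hence $\chi_\theta(3A)=-2$) or the order-$9$ data to single out $\chi_2$.
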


\begin{proof}
By Proposition~\ref{prop:torsion-geometric}, we may without loss of generality assume $\Aut_K(X) \cong \PSL_2(8)$, by replacing~$K$ with a finite extension.

We now choose an embedding $K\hookrightarrow \mathbb{C}$ and analyze the induced representation $\rho$ of $\Aut_K(X) \cong \PSL_2(8)$ on $H^1_{\text{sing}}(X(\mathbb{C})^{\text{an}}, \mathbb{Q})$. By standard comparison results, the representation of $\Aut_K(X)$ on $H^1(X_{\overline{\mathbb{Q}}, \text{\'et}}, \mathbb{Q}_\ell)$ will be isomorphic to the representation  obtained from $\rho$ by extending scalars from $\mathbb{Q}$ to $\mathbb{Q}_\ell$.

Hodge theory tells us  $H^1_{\mathrm{sing}}(C,\mathbb{C})$ decomposes as the direct sum of two complex-conjugate  $7$-di\-men\-sio\-nal $\PSL_2(8)$-representations $\chi, \overline{\chi}$. 
As $\PSL_2(8)$ in fact acts on $H^1_{\mathrm{sing}}(C,\mathbb{Q})$, it follows that the action of every element of $\PSL_2(8)$ on $H^1_{\mathrm{sing}}(C,\mathbb{C})$ has trace in $\mathbb{Q}$. Furthermore,  the action of $\PSL_2(8)$ on $H^1_{\mathrm{sing}}(C,\mathbb{C})$ is faithful since the genus of $X$ is greater than $1$. 

We now decompose $H_{\mathrm{sing}}^1(C,\mathbb{C}) = \chi \oplus \overline{\chi}$ as an $\Aut_K(X) \cong \textup{PSL}_2(8)$-representation using character theory. In the following table, $\zeta_n$ is a choice of primitive $\supth{n}$ root of unity and $\bar{\zeta_n}$ its complex conjugate.

\begin{small}
\begin{table}[!h]
\centering
\begin{tabular}{| c | c | c | c | c | c | c | c | c | c|}
\hline
Class		& 1 & 2		& 3		& 4				& 5				& 6				& 7				& 8				& 9				\\
Size		& 1	& 63 	& 56	& 72			& 72 			& 72 			& 56 			& 56 			& 56			\\
Order		& 1 & 2		& 3		& 7				& 7				& 7				& 9				& 9				& 9				\\
\hline
$\chi_1$ 	& 1	& 1		& 1 	& 1				& 1				& 1 			& 1 			& 1 			& 1	 			\\
$\chi_2$	& 7 & -1	& -2	& 0				& 0 			& 0				& 1				& 1				& 1				\\
$\chi_3$	& 7 & -1	& 1		& 0				& 0 			& 0				& $-\zeta_9-\bar{\zeta_9}$	& $\zeta_9^2+\bar{\zeta_9^2}$	& $\zeta_9^4+\bar{\zeta_9^4}$	\\
$\chi_4$	& 7 & -1	& 1		& 0				& 0 			& 0				& $\zeta_9^4+\bar{\zeta_9^4}$	& $-\zeta_9-\bar{\zeta_9}$		& $\zeta_9^2+\bar{\zeta_9^2}$	\\
$\chi_5$	& 7 & -1	& 1		& 0				& 0 			& 0			& $\zeta_9^2+\bar{\zeta_9^2}$	& $\zeta_9^4+\bar{\zeta_9^4}$	& $-\zeta_9-\bar{\zeta_9}$		\\
$\chi_6$	& 8 & 0		& -1	& 1				& 1 			& 1				& -1			& -1			& -1			\\
$\chi_7$	& 9 & 1		& 0		& $\zeta_7+\bar{\zeta_7}$		& $\zeta_7^2+\bar{\zeta_7^2}$	& $\zeta_7^3+\bar{\zeta_7^3}$	& 0				& 0				& 0				\\
$\chi_8$	& 9 & 1		& 0		& $\zeta_7^3+\bar{\zeta_7^3}$	& $\zeta_7+\bar{\zeta_7}$ 	& $\zeta_7^2+\bar{\zeta_7^2}$	& 0				& 0				& 0				\\
$\chi_9$	& 9 & 1		& 0		& $\zeta_7^2+\bar{\zeta_7^2}$	& $\zeta_7^3+\bar{\zeta_7^3}$	& $\zeta_7+\bar{\zeta_7}$	& 0				& 0				& 0				\\
\hline
\end{tabular}
\caption{Character table for $\textup{PSL}_2(8)$. See \cite[p.\ 6]{MR827219}.}
\label{tbl:PSL28CharTable}
\end{table}
\end{small}

First note that if the $7$-dimensional representation $\chi$ has a trivial subrepresentation, then this forces $\chi$ itself to be trivial (since the smallest non-trivial irreducible representation of $\PSL_2(8)$ has dimension~$7$). If this happens, then $\chi$, and in turn $\overline{\chi}$, are trivial $\PSL_2(8)$-representations. This contradicts the faithfulness of $H_{\mathrm{sing}}^1(C,\mathbb{C}) = \chi \oplus \overline{\chi}$ as a $\PSL_2(8)$-representation; hence $\chi$ is irreducible. So $H_{\mathrm{sing}}^1(C,\mathbb{C})$ decomposes as a sum of an irreducible $7$-dimensional representation and its complex conjugate.

Of the four $7$-dimensional irreducible representations $\chi_i, i=2,\ldots, 5$,
of $\PSL_2(8)$ in the character table below, the only one that has the property that $\chi \oplus \overline{\chi}$ has all its traces in $\mathbb{Q}$ 
is $\chi_2$. Hence
$$\rho \cong \chi_2\oplus \overline{\chi_2}\cong \chi_2\oplus \chi_2.$$

Now we compute the inner product
$$\langle \chi_2 \otimes \chi_2,\chi_2 \rangle = 7 \cdot 49-63-2 \cdot 4 \cdot 56+56+56+56 = 0.$$

Thus $\chi_2$ does not appear in the decomposition of $\chi_2 \otimes \chi_2$ into irreducibles. Hence there can be no $\PSL_2(8)$-equivariant map from $\chi_2 \oplus \chi_2$ to $(\chi_2 \oplus \chi_2)^{\otimes 2}$, which means 
$H^0(\PSL_2(8), \Hom(\mathscr{I}/\mathscr{I}^2,\mathscr{I}^2/\mathscr{I}^3)) =0$. Thus $H^0(\PSL_2(8), A(G))=0$, and by Proposition~\ref{AutProp}, the class $J(C)$ is torsion. Indeed, if $\Aut_K(C)=\PSL_2(8)$, then the class has order a divisor of $504$.
\end{proof}

\begin{cor}
Let $X$ be as in Proposition~\ref{FrickeMacbeath}. Then the Ceresa class $\nu(X)$ as defined in \cite{HM} is torsion.
\end{cor}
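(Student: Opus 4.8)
The plan is to deduce the corollary directly from Proposition~\ref{FrickeMacbeath} together with the comparison result of Proposition~\ref{Lcompare}. Recall that Proposition~\ref{Lcompare} establishes, for a smooth proper curve, the equality $\widetilde{J}(X) = \nu(X)$, where $\widetilde{J}(X)$ is the pullback of the class $\widetilde{J}$ of Corollary~\ref{cor:uniquepreimage} (the unique preimage of $2J_{\univ}$ under the coefficient-refinement map $H^1(\Out(G), A_W(G)) \to H^1(\Out(G), A(G))$). So the task reduces to propagating the torsionness of $J(X)$, already proved in Proposition~\ref{FrickeMacbeath}, across this identification.

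First I would observe that the refinement map $H^1(\Out(G), A_W(G)) \to H^1(\Out(G), A(G))$ and the corresponding Galois-cohomology pullbacks are all homomorphisms of abelian groups, hence carry torsion to torsion. Since $J(X) = o_\ell^* J_{\univ}$ is torsion by Proposition~\ref{FrickeMacbeath}, the class $2J(X)$ is torsion as well. By the defining property of $\widetilde{J}$ in Corollary~\ref{cor:uniquepreimage}, the image of $\widetilde{J}(X)$ under the refinement map $H^1(\Gal(\overline{K}/K), A_W(G)) \to H^1(\Gal(\overline{K}/K), A(G))$ is exactly $2J(X)$, which is torsion. To conclude that $\widetilde{J}(X)$ itself is torsion, I would invoke the uniqueness (equivalently, injectivity) part of Corollary~\ref{cor:uniquepreimage}: the preimage is unique, so the refinement map is injective on the relevant subgroup, and an injective homomorphism of abelian groups reflects torsion (if $n \cdot \widetilde{J}(X)$ maps to $n \cdot 2J(X) = 0$ for some $n > 0$, then by injectivity $n \cdot \widetilde{J}(X) = 0$). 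Therefore $\widetilde{J}(X)$ is torsion.

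Finally, applying $\widetilde{J}(X) = \nu(X)$ from Proposition~\ref{Lcompare} yields that $\nu(X)$ is torsion, as desired. I expect the only genuinely delicate point to be verifying that the injectivity/uniqueness in Corollary~\ref{cor:uniquepreimage} actually transports to the Galois-cohomological setting after pullback along $o_\ell$; strictly speaking Corollary~\ref{cor:uniquepreimage} is a statement about $H^1(\Out(G), -)$, and one must check that pulling back to $H^1(\Gal(\overline{K}/K), -)$ preserves enough structure to run the torsion-reflection argument. In practice this is harmless because one works entirely with the universal classes $J_{\univ}$ and $\widetilde{J}$ and pulls back at the very end, so torsionness of the pullback follows formally from torsionness of $2J_{\univ}$ upstairs combined with the uniqueness clause; alternatively, one may avoid the subtlety altogether by noting that $\nu(X) = \widetilde{J}(X)$ is by construction a pullback of the universal class $\widetilde{J}$, whose relationship to $J_{\univ}$ is $2J_{\univ} = (\text{refinement}) \circ \widetilde{J}$, so $\nu(X)$ is torsion precisely because $2J(X)$ is.
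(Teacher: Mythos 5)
Your high-level route is the same as the paper's (the paper's entire proof is ``immediate from Proposition \ref{Lcompare}''): identify $\nu(X)$ with $\widetilde{J}(X)$ and transport torsionness from $J(X)$. But the justification you give for the transport step has a genuine gap, and you half-notice it without repairing it. Corollary \ref{cor:uniquepreimage} gives injectivity of $H^1(\Out(G), A_W(G))\to H^1(\Out(G), A(G))$, i.e.\ of the refinement map \emph{upstairs}; it says nothing about injectivity of $H^1(\Gal(\overline{K}/K), A_W(G))\to H^1(\Gal(\overline{K}/K), A(G))$, which is the map you actually need to reflect torsion, and whose kernel is in general the image of the connecting map from $H^0(\Gal(\overline{K}/K), \Hom(\mathscr{I}/\mathscr{I}^2, (\mathscr{I}^2/\mathscr{I}^3)/W))$ --- not obviously zero or even torsion without further input. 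Your proposed ``alternative'' does not close this hole either: it asks to run the injectivity argument at the universal level, but nothing is torsion at the universal level --- only the Galois pullback $2J(X)$ is known to be torsion, not $2J_{\univ}$ itself --- so there is no torsion statement upstairs to reflect.

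The gap is fixable, and the fix stays entirely within the paper's toolkit. The cleanest repair is to rerun the argument of Propositions \ref{AutProp} and \ref{FrickeMacbeath} directly with the coefficients of $\nu(X)=\widetilde{J}(X)$: the character computation in Proposition \ref{FrickeMacbeath} shows $H^0(\PSL_2(8), \Hom(\mathscr{I}/\mathscr{I}^2,\mathscr{I}^2/\mathscr{I}^3)\otimes\Q_\ell)=0$, hence a fortiori the $\PSL_2(8)$-invariants of $\Hom(\mathscr{I}/\mathscr{I}^2,W)\otimes\Q_\ell$ and of its quotient $A_W(G)\otimes\Q_\ell$ vanish, so the inflation--restriction argument of Proposition \ref{AutProp} applies verbatim to the class in $H^1(\Gal(\overline{K}/K), A_W(G))$ and shows $\widetilde{J}(X)=\nu(X)$ is torsion. (Alternatively, one can bound the kernel of the refinement map in Galois cohomology by $H^0(\Gal(\overline{K}/K),\Hom(\mathscr{I}/\mathscr{I}^2,(\mathscr{I}^2/\mathscr{I}^3)/W))$ and kill that group up to torsion by a weight argument; but that imports an input the paper does not invoke here.) Either way, the step ``$2J(X)$ torsion $\Rightarrow$ $\widetilde{J}(X)$ torsion'' needs one of these arguments; as written, your appeal to the uniqueness clause of Corollary \ref{cor:uniquepreimage} does not supply it.
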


\begin{proof}
This is immediate from Proposition~\ref{Lcompare}.
\end{proof}

\begin{remark}
This is, to the authors' knowledge, the first known example of a non-hyperelliptic curve such that the image of the Ceresa cycle under the ($\ell$-adic) Abel--Jacobi map is torsion. An analogous argument (with the mixed Hodge structure on the Betti fundamental group) shows that the Hodge-theoretic analogue is also torsion (that is, the image of the Ceresa cycle in the appropriate intermediate Jacobian is torsion). It is natural to ask if the Ceresa cycle itself is torsion in the Chow ring of the Jacobian of $X$ modulo algebraic equivalence. Benedict Gross has explained to us that this is a prediction of the Beilinson conjectures.

It would be interesting to find (or prove the non-existence of) a positive-dimensional family of non-hyperelliptic curves with torsion Ceresa class.
\end{remark}

\subsection{Curves dominated by a curve with torsion modified diagonal or Johnson class}\label{sec:dominated-torsion}
In this last subsection, we prove the following. 

\begin{thm}\label{thm:dominated-torsion}
Let $X$ be a curve over a finitely generated field $k$ of characteristic zero, and let $f\colon X\to Y$ be a dominant map of curves over $k$. Then:
\begin{enumerate}
    \item\label{thm:dt-1} If\, $x\in X(k)$ is a rational point and $MD(X, x)$ is torsion, then $MD(Y, f(x))$ is torsion.
    \item\label{thm:dt-2} If\, $J(X)$ is torsion, then $J(Y)$ is torsion.
\end{enumerate}
\end{thm}

We view this as analogous to the fact that any curve dominated by a hyperelliptic curve is hyperelliptic.

As a corollary, we have the following. 

\begin{cor}\label{genus-3-example}
Let $\iota\in \PSL_2(8)$ be any element of order $2$. If\, $X/K$ is a curve of genus $7$ over a number field with $\textup{Aut}_K(X)\simeq \PSL_2(8)$, then $X/\langle \iota\rangle$ is a non-hyperelliptic curve of genus $3$ with $J(X/\langle\iota\rangle)$ torsion.
\end{cor}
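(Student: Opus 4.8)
The plan is to derive Corollary \ref{genus-3-example} as a direct application of Theorem \ref{thm:dominated-torsion}(2) together with Proposition \ref{FrickeMacbeath}. First I would observe that the hypotheses of Proposition \ref{FrickeMacbeath} are satisfied: $X$ has $X_{\overline{\mathbb{Q}}}$ isomorphic to the Fricke-Macbeath curve $C$ (being a genus seven curve over a number field with $\Aut_K(X)\simeq \PSL_2(8)$), so $J(X)$ is torsion. Since $\iota\in\PSL_2(8)\simeq \Aut_K(X)$ has order $2$, the quotient map $f\colon X\to X/\langle\iota\rangle$ is a dominant map of curves defined over $K$ (a number field, hence finitely generated of characteristic zero, so Theorem \ref{thm:dominated-torsion} applies). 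Theorem \ref{thm:dominated-torsion}(2) then immediately gives that $J(X/\langle\iota\rangle)$ is torsion.

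The remaining content is the two geometric claims about the quotient $Y\colonequals X/\langle\iota\rangle$: that it has genus three and that it is non-hyperelliptic. For the genus, I would apply the Riemann-Hurwitz formula to the degree $2$ map $f\colon X\to Y$. Writing $g(X)=7$, we have $2\cdot 7-2 = 2(2g(Y)-2)+R$ where $R$ is the number of ramification points (equivalently, the number of fixed points of $\iota$ on $C$). Here I would use the fact, readable off the character table (Table \ref{tbl:PSL28CharTable}), that an order $2$ element of $\PSL_2(8)$ acts on $H^1_{\mathrm{sing}}(C,\mathbb{C})$ with trace $\chi_2(\text{class }2)+\overline{\chi_2}(\text{class }2) = -1 + (-1) = -2$; by the Lefschetz/topological fixed-point count for the involution $\iota$ on the genus $7$ surface, the number of fixed points is $2 - \mathrm{tr}(\iota \mid H^1) = 2-(-2) = 4$. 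Plugging $R=4$ into Riemann-Hurwitz gives $12 = 2(2g(Y)-2)+4$, so $2g(Y)-2 = 4$ and $g(Y)=3$, as claimed.

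For non-hyperellipticity of the genus three quotient, I would argue by contradiction. Suppose $Y$ were hyperelliptic. A genus three hyperelliptic curve has a canonical model as a double cover of $\mathbb{P}^1$, and more to the point, one can compose $f\colon X\to Y$ with the hyperelliptic map $Y\to \mathbb{P}^1$ to realize $X$ as a curve dominating $\mathbb{P}^1$ through a hyperelliptic $Y$; the cleanest contradiction is group-theoretic. The involution $\iota$ generating the deck group of $f$ is an order $2$ element of the simple group $\PSL_2(8)$, and since $\PSL_2(8)$ is simple and nonabelian, $\iota$ is not central in $\Aut_{\overline{K}}(C)$. One then shows the hyperelliptic involution (if $Y$ were hyperelliptic) would have to be induced by an element of $\Aut(C)$ commuting appropriately with $\iota$, forcing a central or otherwise forbidden element into $\PSL_2(8)$ — contradicting simplicity exactly as in the remark preceding Proposition \ref{FrickeMacbeath}. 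Alternatively, and perhaps more robustly, I would note that the Johnson class of any hyperelliptic curve is $2$-torsion by Proposition \ref{hyperelliptic}, whereas one can check directly that the genus three quotient is not hyperelliptic by a dimension count on its space of holomorphic differentials as a $\PSL_2(8)$-isotypic piece.

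The main obstacle I anticipate is the non-hyperellipticity claim, rather than the torsion statement (which is essentially formal given the cited results) or the genus computation (which is a routine Riemann-Hurwitz calculation once the fixed-point count is in hand). The subtlety is that ``$X$ dominates $Y$ and $X$ is not hyperelliptic'' does \emph{not} by itself force $Y$ non-hyperelliptic, so one genuinely needs the specific structure of the $\PSL_2(8)$-action — either the simplicity argument showing no suitable extra involution can exist, or an explicit analysis of the invariant differentials $H^0(C,\Omega^1)^{\langle\iota\rangle}$ and the resulting canonical map of $Y$. I would present the simplicity-based argument as the primary route, since it mirrors the reasoning already used in Section \ref{sec:Fricke-Macbeath} to conclude that $C$ itself is not hyperelliptic, and keep the differential-theoretic check as a cross-verification.
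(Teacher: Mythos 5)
Your treatment of the torsion statement and of the genus computation matches the paper exactly: torsion is immediate from Theorem \ref{thm:dominated-torsion}(2) plus Proposition \ref{FrickeMacbeath}, and the genus follows from the trace $-2$ of an involution on $H^1$, the Lefschetz count of $4$ fixed points, and Riemann--Hurwitz. The gap is in your primary argument for non-hyperellipticity. You assert that if $Y=X/\langle\iota\rangle$ were hyperelliptic, its hyperelliptic involution ``would have to be induced by an element of $\Aut(C)$,'' and then derive a contradiction with the simplicity of $\PSL_2(8)$. But there is no reason the hyperelliptic involution of $Y$ lifts to $X$: the cover $X\to Y$ is a degree $2$ cover branched at $4$ points, and an automorphism of $Y$ lifts only if it preserves the branch locus and the isomorphism class of the cover, which is not automatic. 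The simplicity argument in Section \ref{sec:Fricke-Macbeath} works for $C$ itself precisely because the hyperelliptic involution of a curve is \emph{central in its own automorphism group}; that reasoning does not transfer to a quotient. Your fallback via Proposition \ref{hyperelliptic} is also circular for this purpose (it gives information about hyperelliptic curves, not a criterion for ruling hyperellipticity out), and a ``dimension count'' on $H^0(C,\Omega^1)^{\langle\iota\rangle}$ only recovers the genus; to exclude hyperellipticity one must show the canonical map of $Y$ is an embedding (equivalently, for genus $3$, that there is no quadric relation among the invariant differentials), which a dimension count does not do.

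The paper's route is different and avoids the lifting issue entirely: since $\PSL_2(8)$ has a unique conjugacy class of involutions, all quotients $X/\langle\iota\rangle$ are isomorphic to one another, and \cite[Section 2]{TopVerschoor} exhibits an explicit model of this quotient as a smooth plane quartic, which is non-hyperelliptic because a smooth plane quartic is canonically embedded. If you want to keep your argument self-contained without the external reference, the honest completion is the Noether-type computation (injectivity of $\mathrm{Sym}^2 H^0(Y,\Omega^1_Y)\to H^0(Y,\Omega_Y^{\otimes 2})$), not the simplicity argument.
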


\begin{remark}
Note that curves $X$ as above exist---for any model of the Fricke--Macbeath curve over a number field $K$, the base change to a finite extension of $K$ over which all the automorphisms are defined will suffice.
\end{remark}

\begin{proof}[Proof of Corollary~\ref{genus-3-example}]
The statement that $J(X/\langle\iota\rangle)$ is torsion is immediate from Theorem~\ref{thm:dominated-torsion} and Proposition~\ref{FrickeMacbeath}. So we need only verify that such curves have genus $3$ and are not hyperelliptic.

To see that $X/\langle \iota \rangle$ has genus $3$, note that $\PSL_2(8)$ has a unique conjugacy class of order $2$, whose trace (by the discussion in the proof of Proposition~\ref{FrickeMacbeath}) on $H^1(X)$ is $-2$. Hence by the Lefschetz fixed point theorem, $\iota$ has four fixed points. Now the Riemann--Hurwitz theorem gives the claim.

To show that $X/\langle \iota\rangle$ is not hyperelliptic, first we note that $\PSL_2(8)$ has a unique conjugacy class of order~$2$. Hence for any two elements $\iota_1, \iota_2$ in this conjugacy class, the quotient curves $X/\langle \iota_1 \rangle$ and $X/\langle \iota_2 \rangle$ are isomorphic. Now in \cite[Section 2]{TopVerschoor}, the authors give a model for one of the quotient curves---it is a smooth quartic curve in $\mathbb{P}^2$. Thus this isomorphism class of curves is non-hyperelliptic.
\end{proof}

We now give the proof of Theorem~\ref{thm:dominated-torsion}. We require the following lemmas.

\begin{lemma}\label{lem:unipotent-radical}
  Let $G$ be a group, and let
  $$0\lra U\lra V\to W\lra 0$$
  be an extension of\, $G$-representations over an algebraically closed field of characteristic zero, with $U, W$ semisimple. Then the extension splits if and only if the unipotent radical of the Zariski-closure of\, $G$ in $\GL(V)$ is trivial.
\end{lemma}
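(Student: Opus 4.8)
The plan is to reduce the statement to one about the Zariski-closure $\overline{G} \subset GL(V)$, viewed as a linear algebraic group, and then exploit the interaction between its unipotent radical and semisimple representations. First I would observe that passing from $G$ to $\overline{G}$ changes nothing relevant: a linear subspace of $V$ is $G$-invariant if and only if it is $\overline{G}$-invariant (the condition that a fixed subspace be invariant cuts out a Zariski-closed subgroup of $GL(V)$), and likewise a linear map between subquotients is $G$-equivariant if and only if it is $\overline{G}$-equivariant. Consequently $U$ and $W$ remain semisimple as $\overline{G}$-representations, and the sequence splits over $G$ if and only if it splits over $\overline{G}$. Thus it suffices to prove the lemma with $G$ replaced by $\overline{G}$, whose defining feature is that it acts \emph{faithfully} on $V$.

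For the direction ($\Leftarrow$), suppose the unipotent radical $R_u(\overline{G})$ is trivial, so that the identity component $\overline{G}^{0}$ is reductive. Since the ground field has characteristic zero, a linear algebraic group is linearly reductive exactly when its identity component is reductive (the finite quotient $\overline{G}/\overline{G}^{0}$ contributes nothing by Maschke); hence every finite-dimensional $\overline{G}$-representation is completely reducible. In particular $V$ is semisimple, so $U \hookrightarrow V$ admits a $\overline{G}$-equivariant retraction and the extension splits.

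For the direction ($\Rightarrow$), the key input is the standard fact that \emph{a unipotent normal subgroup $N \trianglelefteq \overline{G}$ acts trivially on every semisimple $\overline{G}$-representation $M$.} Indeed, because $N$ is unipotent and $M \neq 0$, the fixed space $M^{N}$ is nonzero; and because $N$ is normal, $M^{N}$ is a $\overline{G}$-subrepresentation, since for $g \in \overline{G}$, $n \in N$, $m \in M^{N}$ one computes $n(gm) = g\,(g^{-1}ng)\,m = gm$. If $M$ is irreducible this forces $M^{N} = M$, and the general semisimple case follows by decomposing $M$ into irreducibles. Applying this with $N = R_u(\overline{G})$ (which is normal in $\overline{G}$, being characteristic in $\overline{G}^{0}$) and with $M = U$ and $M = W$, we conclude that $R_u(\overline{G})$ acts trivially on both $U$ and $W$. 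If the extension splits, then $V \cong U \oplus W$ as $\overline{G}$-representations, so $R_u(\overline{G})$ acts trivially on all of $V$; as $\overline{G} \hookrightarrow GL(V)$ is faithful, this forces $R_u(\overline{G}) = 1$.

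I expect the only genuine subtlety to be the bookkeeping in the reduction step, namely verifying carefully that semisimplicity and splitting are insensitive to replacing $G$ by its Zariski closure, together with invoking the correct form of complete reducibility (valid precisely in characteristic zero, and for possibly non-connected groups). The arithmetic heart of the argument, the triviality of the unipotent-radical action on semisimple representations, is elementary once normality and unipotence are in place.
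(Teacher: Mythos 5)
Your proof is correct, and your reduction to the Zariski closure $\overline{G}$ is carefully justified. For the direction ``splits $\Rightarrow$ trivial unipotent radical'' you and the paper do essentially the same thing: the paper compresses it to ``$V$ is semisimple, hence the closure is reductive,'' and you unpack exactly why (a normal unipotent subgroup acts trivially on any semisimple rational representation, and the action on $V$ is faithful). The real divergence is in the converse. The paper proves the contrapositive cohomologically: it takes $H = \ker\bigl(\overline{G} \to GL(U\oplus W)\bigr)$, which is unipotent and normal with reductive quotient, and uses inflation-restriction to see that a nonzero extension class in $H^1(\overline{G}, \Hom(W,U))$ restricts to a nonzero class on $H$, forcing $H \neq 1$. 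You instead argue directly that a trivial unipotent radical makes $\overline{G}$ linearly reductive in characteristic zero, so $V$ is completely reducible and the sequence splits. Your route is more elementary and avoids group cohomology entirely; the paper's route has the advantage of locating the obstruction to splitting precisely on the unipotent normal subgroup $H$, which is the mechanism that gets refined in the subsequent Lemma on identifying the unipotent radical with the closure of the image of a prescribed subgroup $S$. Both arguments rest on the same underlying fact (linear reductivity of reductive groups in characteristic zero), just deployed at different points.
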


\begin{proof}
If the extension splits, then $V$ is semisimple. Hence the Zariski-closure of the image of $G$ is reductive, and we are done. 

On the other hand, assume the sequence does not split. We may without loss of generality replace $G$ with the Zariski-closure of its image in $\GL(V)$; we now wish to argue that $G$ is not reductive. Let $H\subset G$ be the kernel of the natural map $G\to \GL(U\oplus W)$; the subgroup $H$ is evidently unipotent and normal, so it suffices to show that $H$ is non-trivial. By the semisimplicity of $U\oplus W$, it follows that $G/H$ is reductive; hence inflation-restriction shows that $H^1(G, \Hom(W, U))\to H^1(H, \Hom(W, U))$ is injective (using the assumption of characteristic zero). But $H^1(G, \Hom(W,U))$ is non-trivial by assumption. Hence the same is true for $H^1(H, \Hom(W,U))$, and thus $H$ is non-trivial, as desired.
\end{proof}

\begin{lemma}\label{lem:extn-unipotent-radical}
  Let $G$ be a group, and let
  $$0\lra U\lra V\lra W\lra 0$$
  be an extension of\, $G$-representations over an algebraically closed field $k$ of characteristic zero, with $U, W$ semisimple. Let $S\subset G$ be a subgroup acting trivially on $U, W$, and let $m\colon S\to \Hom(W, U)$ be the induced map.  Then the image of the extension class of this sequence under the natural map
  $$H^1(G, \Hom(W, U))\lra H^1(G, \Hom(W,U)/\textup{im}(m))$$
  vanishes if and only if the unipotent radical of the Zariski-closure of\, $G$ in $\GL(V)$ equals the Zariski-closure of the image of\, $S$ in $\GL(V)$.
\end{lemma}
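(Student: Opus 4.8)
The plan is to imitate the proof of Lemma~\ref{lem:unipotent-radical}, but to retain the restriction of the extension class to the unipotent radical rather than only testing whether it vanishes. As there, I would first replace $G$ by its Zariski closure $\overline{G}\subseteq GL(V)$ and write $M\colonequals \Hom(W,U)$. Semisimplicity of $U$ and $W$ makes the image of $\overline{G}$ in $GL(U)\times GL(W)$ reductive, and its kernel $R\colonequals \ker(\overline{G}\to GL(U)\times GL(W))$ is a closed---hence, in characteristic zero, connected---normal unipotent subgroup whose quotient $L\colonequals \overline{G}/R$ is reductive; since any connected normal unipotent subgroup maps trivially to the reductive $L$, we get $R=R_{\mathrm u}(\overline{G})$. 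Using the flag $0\subset U\subset V$ I would identify $R$ with a $k$-linear subspace of $M$ (the $\phi$ for which $1+\phi$ lies in $R$, where $\phi$ acts via $V\twoheadrightarrow W\xrightarrow{\phi}U\hookrightarrow V$; this is a homomorphism to $(M,+)$ because $\phi\circ\phi=0$). Since $R$ acts trivially on $U$ and on $W$, it acts trivially on $M$ and on all of its subquotients. As the image of $S$ in $GL(V)$ lies in $R$ by hypothesis, under this identification $\textup{im}(m)\subseteq R$, and the Zariski closure $\overline{S}$ of the image of $S$ is precisely the $k$-span of $\textup{im}(m)$ inside $R$.

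Next I would identify the restriction $[c]|_R$ and compute the relevant cohomology. Viewing a class in $H^1(G,N)$ as an extension $0\to N\to E\to k\to 0$ of the trivial module $k$, a splitting is a $G$-fixed vector of $E$ mapping to $1$; for an algebraic $\overline{G}$-module $N$ this is a Zariski-closed condition on the acting element, so by density a $G$-equivariant splitting is automatically $\overline{G}$-equivariant. Hence, for such $N$, the class $[c]$ vanishes in the abstract cohomology $H^1(G,N)$ if and only if the corresponding rational class vanishes in the algebraic cohomology $H^1_{\mathrm{rat}}(\overline{G},N)$. Taking $N\colonequals M/\overline{S}$ (an algebraic module, as $\overline{S}$ is closed), the inflation--restriction sequence for $1\to R\to \overline{G}\to L\to 1$, together with the vanishing of the rational cohomology of the reductive group $L$ in positive degrees (characteristic zero), gives $H^1_{\mathrm{rat}}(\overline{G},N)\cong \Hom_L(R,N)$. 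A direct cocycle computation with a $k$-linear section $s\colon W\to V$---exactly the relative analogue of the identity $c|_S=m$ used to define $m$, evaluated on an $h\in R$ that fixes $U$ and $W$---shows that $[c]|_R$ is the tautological inclusion $R\hookrightarrow M$. Therefore the image of $[c]$ in $H^1(G,M/\overline{S})$ is the composite $R\hookrightarrow M\twoheadrightarrow M/\overline{S}$, which vanishes if and only if $R\subseteq \overline{S}$, i.e.\ (as $\overline{S}\subseteq R$ always) if and only if $R_{\mathrm u}(\overline{G})=\overline{S}$.

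The point requiring care---and what I expect to be the main obstacle---is reconciling the coefficient $M/\textup{im}(m)$ appearing in the statement with the closed quotient $M/\overline{S}$ that the argument above controls. First, for $M/\textup{im}(m)$ to be a $G$-module one needs $\textup{im}(m)$ to be $G$-stable, which holds whenever $S$ is normal in $G$---in particular if $S$ is taken to be the full subgroup of $G$ acting trivially on $U$ and $W$---and I would impose this. Granting it, the forward implication is unconditional: vanishing modulo $\textup{im}(m)$ forces vanishing modulo the larger $\overline{S}$, hence $R_{\mathrm u}(\overline{G})=\overline{S}$. The reverse implication is where the subtlety concentrates, because the restriction computation shows that vanishing modulo $\textup{im}(m)$ is equivalent to the \emph{on-the-nose} equality $R_{\mathrm u}(\overline{G})=\textup{im}(m)$, not merely to equality of closures; these coincide exactly when $\textup{im}(m)$ is already Zariski closed, i.e.\ $\textup{im}(m)=\overline{S}$. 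Thus the remaining work is to verify that $\textup{im}(m)$ is saturated---equivalently, that $S$ surjects onto the $k$-points of $R_{\mathrm u}(\overline{G})$---in the settings where the lemma is applied, after which the clean $M/\overline{S}$ statement yields the stated equivalence verbatim.
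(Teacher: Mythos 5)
Your argument is correct, and at its core it is the same as the paper's: both identify the unipotent radical of $\overline{G}\subset GL(V)$ with the kernel $R$ of $\overline{G}\to GL(U)\times GL(W)$ (a vector subgroup of $1+\Hom(W,U)$, normal with reductive quotient), observe that the restriction of the extension class to $R$ is the tautological inclusion $R\hookrightarrow\Hom(W,U)$, and conclude that vanishing modulo $\overline{S}$ is equivalent to $R\subseteq\overline{S}$. The packaging differs: you compute via rational cohomology of $\overline{G}$ (the density argument for splittings, inflation--restriction for $1\to R\to\overline{G}\to L\to 1$, and vanishing of higher rational cohomology of the reductive quotient), whereas the paper simply pushes the class out to an extension $0\to\Hom(W,U)/\textup{im}(m)\to V'\to k\to 0$, notes that the kernel of $R\to GL(V')$ is exactly the closure of the image of $S$, and quotes Lemma~\ref{lem:unipotent-radical}. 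The paper's route is shorter and avoids rational cohomology; yours makes the cocycle computation explicit. (One small slip: ``closed, hence connected in characteristic zero'' is not a valid implication in general; what you actually need is that a closed subgroup of a vector group in characteristic zero is a linear subspace.)

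The subtlety you isolate at the end is genuine, and the paper's own proof does not address it: the opening ``without loss of generality replace $S$ by the Zariski closure of its image'' silently replaces $\textup{im}(m)$ by its $k$-span, which changes the coefficient module. As literally stated the reverse implication can fail: take $G=\mathbb{Z}^2$ acting on $V=k^2$ by unipotent upper-triangular matrices with upper-right entry $a+b/2$, with $U=W=k$ trivial and $S=\mathbb{Z}\times\{0\}$; then $\textup{im}(m)=\mathbb{Z}$ and the unipotent radical equals $\overline{S}=\mathbb{G}_a$, yet the pushed-forward class $(a,b)\mapsto a+b/2\bmod\mathbb{Z}$ is nonzero in $H^1(G,k/\mathbb{Z})$. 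So the lemma must be read with $\textup{im}(m)$ replaced by its $k$-span (equivalently, by $\overline{S}$), which is how it is used: in the proof of Theorem~\ref{thm:dominated-torsion} the relevant $\textup{im}(m)$ is the image of a $\mathbb{Q}_\ell$-linear map, hence already closed, and your normality caveat (needed for $\Hom(W,U)/\textup{im}(m)$ to be a $G$-module) also holds there. One correction to your discussion: vanishing modulo $\textup{im}(m)$ is \emph{not} equivalent to the on-the-nose equality $R=\textup{im}(m)$ --- replacing $b/2$ by $b$ in the example above gives vanishing while $R\supsetneq\textup{im}(m)$ --- but this overstatement does not affect your proof, since all you use is that closedness of $\textup{im}(m)$ suffices to reduce to the clean $M/\overline{S}$ statement.
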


\begin{proof}
  Let $G$ be an abstract group and $\overline{G}$ an algebraic group over $k$, and let $G\to\overline{G}$ be a homomorphism with Zariski-dense image. Then a short exact sequence of $\overline{G}$-representations splits if and only if it splits as a sequence of $G$-representations; in other words, for any $\overline{G}$-representation $Q$, the natural map
  $$H^1\left(\overline{G}, Q\right)\longrightarrow H^1(G, Q)$$
  is injective. Thus we may without loss of generality replace $G$ with the Zariski-closure of its image in $\GL(V)$ and $S$ with the Zariski-closure of its image in the statement of the lemma. We make this replacement now.

Let $N\subset G$ be the kernel of the natural representation $G\to \GL(U\oplus W)$; this is a unipotent normal subgroup with reductive quotient (by the assumption that $U,W$ are semisimple) and hence equals the unipotent radical of $G$. By definition, we have $S\subset N$. We wish to show that the given vanishing holds in $H^1(G, \Hom(W, U)/\textup{im}(m))$ if and only if $S=N$.

Consider the short exact sequence
$$0\lra \Hom(W, U)/\textup{im}(m)\lra V' \lra k\lra 0$$
induced by our element of $H^1(G, \Hom(W, U)/\textup{im}(m))$. Then by definition, the kernel of $N \to \GL(V')$ is exactly $S$. Thus by Lemma~\ref{lem:unipotent-radical}, this extension splits if and only if $N \subset S$. This completes the proof.
\end{proof}

\begin{proof}[Proof of Theorem~\ref{thm:dominated-torsion}]
We first prove~\eqref{thm:dt-1}. Let $\mathscr{I}_X$ be the augmentation ideal in $\mathbb{Z}_\ell[[\pi_1^\ell(X_{\overline{k}}, \overline x)]]$, and let $\mathscr{I}_Y$ be the augmentation ideal in $\mathbb{Z}_\ell[[\pi_1^\ell(Y_{\overline{k}}, {f(\bar x)})]]$.

Let $U_X=\mathscr{I}_X^2/\mathscr{I}_X^3\otimes \mathbb{Q}_\ell$, $V_X=\mathscr{I}_X/\mathscr{I}_X^3\otimes \mathbb{Q}_\ell$, and $W_X=\mathscr{I}_X/\mathscr{I}_X^2\otimes \mathbb{Q}_\ell$, and similarly let $U_Y=\mathscr{I}_Y^2/\mathscr{I}_Y^3\otimes \mathbb{Q}_\ell$, $V_Y=\mathscr{I}_Y/\mathscr{I}_Y^3\otimes \mathbb{Q}_\ell$, and $W_Y=\mathscr{I}_Y/\mathscr{I}_Y^2\otimes \mathbb{Q}_\ell$. Note that by Faltings's proof of the Tate conjecture for Abelian varieties \cite[Satz 3]{Faltings83}, it follows that $W_X, W_Y$ are semisimple Galois representations; as $U_X, U_Y$ are quotients of $W_X^{\otimes 2}, W_Y^{\otimes 2}$, respectively, they are also semisimple.

By the observation on semisimplicity in the previous paragraph and Lemma~\ref{lem:unipotent-radical}, the Zariski-closure of the image of Galois in $\GL(V_X)$ is reductive. Hence the Zariski-closure of Galois in $\GL(V_Y)$ is reductive, as a quotient of a reductive group is reductive. Now we conclude by Lemma~\ref{lem:unipotent-radical}.

To prove~\eqref{thm:dt-2}, we proceed analogously, using Lemma~\ref{lem:extn-unipotent-radical} in place of Lemma~\ref{lem:unipotent-radical}. Let $G_X$ be the Zariski-closure of the image of $\pi_1^{\text{\'et}}(X, \bar x)$ in $\GL(V_X)$, and similarly let $G_Y$ be the Zariski-closure of the image of $\pi_1^{\text{\'et}}(Y,  f(\bar x))$ in $\GL(V_Y)$ (note here that we are not taking geometric fundamental groups). Let $S_X$ be the Zariski-closure of the image of $\pi_1^{\text{\'et}}(X_{\overline{k}}, \bar x)$ in $\GL(V_X)$, and let $S_Y$ be the Zariski-closure of the image of $\pi_1^{\text{\'et}}(Y_{\overline{k}},  f(\bar x))$ in $\GL(V_Y)$. 

Unwinding the definition of $J(X), J(Y)$ and applying Lemma~\ref{lem:extn-unipotent-radical}, we see that $J(X)$ (resp.~$J(Y)$) is torsion if and only if $S_X$ (resp.~$S_Y$) is the unipotent radical of $G_X$ (resp.~$G_Y$). By assumption, this is true for $G_X$; now we conclude by the functoriality of $G_X$, $S_X$. That is, $G_Y/S_Y$ is a quotient of $G_X/S_X$, hence reductive.
\end{proof}

\newcommand{\etalchar}[1]{$^{#1}$}
\providecommand{\bysame}{\leavevmode\hbox to3em{\hrulefill}\thinspace}
\providecommand{\MR}{\relax\ifhmode\unskip\space\fi MR }
\providecommand{\MRhref}[2]{%
  \href{http://www.ams.org/mathscinet-getitem?mr=#1}{#2}
}
\providecommand{\href}[2]{#2}

\end{document}